\newcommand\PP{\mathbb P}
\newcommand\A{\mathcal{A}}
\newcommand\C{\mathbb C}
\newcommand\D{\mathcal{D}}
\newcommand\Q{\mathbb Q}
\newcommand\R{\mathbb R}
\newcommand\Z{\mathbb Z}
\newcommand{\olo}{\mathcal{O}}
\renewcommand{\H}{\mathbb{H}}
\newcommand{\eps}{\epsilon}
\newcommand\Aut{\operatorname{Aut}}
\newcommand\dt{\operatorname{DT}}
\newcommand\stab{\operatorname{Stab}}
\newcommand\bra{\langle}
\newcommand\ket{\rangle}
\newcommand{\del}{\partial}
\newcommand{\Hom}{\operatorname{Hom}}
\newcommand{\rk}{\operatorname{rk}}
\newcommand{\Coh}{\operatorname{Coh}}
\newcommand{\Supp}{\operatorname{Supp}}
\newcommand{\tr}{\operatorname{tr}}
\newcommand{\ex}{\operatorname{ex}}
\newcommand{\cl}{\operatorname{cl}}
\newcommand{\ch}{\operatorname{ch}}
\newcommand{\li}{\operatorname{Li}}
\makeatletter \@addtoreset{equation}{section} \makeatother
\newtheorem{thm}{Theorem}
\newtheorem{prop}[thm]{Proposition}
\newtheorem{lem}[thm]{Lemma}
\newtheorem{cor}[thm]{Corollary}
\theoremstyle{definition}
\newtheorem{definition}[thm]{Definition}
\newtheorem{rmk}[thm]{Remark}
\newcommand\narrowdots{\hbox to 1em{.\hss.\hss.}}
\title[Stable pairs, flat connections and GV invariants]{Stable pairs, flat connections and Gopakumar-Vafa invariants}
\date{1 December 2017}
\author{Jacopo Stoppa}
\email{jstoppa@sissa.it}
\address{SISSA, via Bonomea 265, 34136 Trieste, Italy}
\begin{document}

\begin{abstract} Using the interpretation of certain generalised Donald\-son-Thomas invariants, including stable pairs curve counts, as the monodromy of a flat connection on a formal principal bundle, we show that the conjectural Gopakumar-Vafa contributions of all genera to the Gromov-Witten partition function appear in the asymptotics of the corresponding flat sections. The Fourier-Laplace integrals used to produce flat sections lead naturally to the GW/DT change of variable $-q = e^{i u}$.
\end{abstract}

\maketitle

\section{Background and main results}\label{backSec}

\subsection{} Let $X$ be a complex projective Calabi-Yau threefold. The finite rank even homology lattice $\bigoplus^3_{i=0} H_{2i} = \bigoplus^3_{i=0} H_{2i}(X, \Z)/\textrm{tor}$ is endowed with the integral, skew-symmetric, non-degenerate intersection form. We write $(\Gamma, \bra - , - \ket)$ for this lattice and bilinear form. The group algebra $\C[\Gamma]$ can be endowed with its standard commutative product, twisted by the form $\bra - , -\ket$, that is for $x_{\alpha}, x_{\beta} \in \C[\Gamma]$ we set
\begin{equation*}
x_{\alpha} x_{\beta} = (-1)^{\bra \alpha, \beta \ket} x_{\alpha + \beta}, \alpha, \beta \in \Gamma.
\end{equation*}  
Together with the Lie bracket 
\begin{equation*}
[x_{\alpha}, x_{\beta}] = (-1)^{\bra \alpha, \beta \ket} \bra\alpha, \beta\ket x_{\alpha + \beta} 
\end{equation*}
this turns $\C[\Gamma]$ into a Poisson algebra.
\subsection{} Let $\D \subset D^b(\Coh(X))$ be a triangulated subcategory, $U \subset \stab(\D)$ a nonempty open subset of its space of Bridgeland stability conditions. Fixing $\alpha \in \Gamma$, $Z \in U$, we write
\begin{equation*}
\dt(\alpha, Z) \in \Q
\end{equation*} 
for the generalised Donaldson-Thomas enumerative invariant of $Z$-semi\-stable objects in $\D$ whose Chern character (regarded as an element of $\Gamma$, by Poincar\'e duality) equals $\alpha$, assuming this is well defined.
\subsection{} There exist nontrivial examples for the above general setup. In this paper we are concerned with two such examples:
\begin{enumerate} 
\item $\D = \bra \Coh_{\leq 1}(X)\ket_{\tr} \subset D^{b}(\Coh(X))$, the smallest triangulated subcategory containing the torsion coherent sheaves on $X$ supported in dimension $\leq 1$;
\item $\D = \bra \olo_X, \Coh_{\leq 1}(X)\ket_{\tr} \subset D^{b}(\Coh(X))$, where we also allow the structure sheaf of $X$.
\end{enumerate}
Of course (2) generalises (1). It is easy to find stability conditions for $(1)$: fixing the standard heart $\Coh(X)_{\leq 1}$, and writing Chern characters as $(n, \beta) \in H_0 \oplus H_2$, a central charge supported on $\Coh(X)_{\leq 1}$ is given by
\begin{equation*}
Z(n, \beta) = \int_{\beta} \omega_{\C} - n = v_{\beta} - n,
\end{equation*}
where $\omega_{\C} = B + i\omega \in H^2(X, \C)$ is a complexified K\"ahler class. The corresponding DT invariant 
\begin{equation*}
\dt(n, \beta) \in \Q
\end{equation*}
is well-defined and does not depend on $Z$ (\cite{js} Section 6). It is conjecturally related to genus zero Gopakumar-Vafa invariants $n_{0,\gamma}$ in Gromov-Witten theory \cite{gv} by the identity
\begin{equation}\label{torsionConj}
\dt(n, \beta) = \sum_{k > 0, k | (n, \beta)} \frac{n_{0, \beta/k}}{k^2} \in \Z, \text{ for all } n \in \Z 
\end{equation}
(\cite{js} Conjecture 6.20), where the sum is over all positive $k$ dividing the class $(n,\beta) \in H_0\oplus H_2$. %In particular $\dt(n, \beta)$ is conjecturally independent of $n \in \Z$. 
Example (2) requires tilting: fixing as heart the extension closure 
\begin{equation*}
\A  = \bra \olo_X, \Coh_{\leq 1}(X)[-1]\ket_{\ex}
\end{equation*}
and writing Chern characters as $(-n, -\beta, r) \in H_0 \oplus H_2 \oplus H_6 $, we consider the function  
\begin{equation*}
Z(n, \beta, r) = r G + \int_{\beta}\omega_{\C} - n = r G + v_{\beta} - n
\end{equation*}
for fixed $G \in \H$ with real part $\Re(G) > 0$ (the latter condition is not necessary but simplifies some of our arguments, so we always assume it in the rest of the paper). Toda \cite{todaDTPT} proves that $Z$ is a central charge supported on $\A$. In other words for all classes $[\olo]\in \Gamma$ of nontrivial objects $\olo\in\A$, the complex number $Z([\olo])$ lies in the semi-closed upper half plane $\mathfrak{h} = \H\cup\R_{<0}$. Moreover \cite{todaDTPT, todaHall} the corresponding DT invariant is well defined and, for rank $r = 1$, recovers the stable pairs curve counts $P_{n, \beta}$ of Pandharipande-Thomas \cite{pt}: for all $Z$ as above we have
\begin{equation*}
\dt((-n, -\beta, 1), Z) = P_{n, \beta}.
\end{equation*}
Recall that $P_{n, \beta}$ ``counts" curves in $X$ by virtually enumerating sheaves $F \in \Coh(X)$ with dimension at most $1$, support class $[F] = \beta$ and holomorphic Euler characteristic $\chi(F) = n$, endowed with a generically onto section $s\!: \olo_X \to F$. 

In turn the stable pairs invariants $P_{n,\beta}$ are conjecturally related to (all genera) GV theory by the stable pairs refinement of the GW/DT conjectures (\cite{pt} Section 3.4). Namely let $P'_{n, \beta}$ denote the \emph{connected} stable pairs invariants, introduced in loc. cit. by the identity
\begin{equation*}
\sum_{\beta \neq 0} \sum_{n\in \Z}  P'_{n, \beta} q^{n} x_{\beta} = \log\big(1 + \sum_{\beta\neq0} \sum_{n\in\Z} P_{n,\beta} q^n x_{\beta}\big)
\end{equation*}
(so connected invariants always give a well-defined contribution to $P_{n,\beta}$ and in fact determine the disconnected invariants). Then writing 
\begin{equation*}
F_{P, \beta}(q) = \sum_{n\in \Z}  P'_{n, \beta} q^{n}  
\end{equation*}
for the contribution of degree $\beta$ to connected stable pairs theory, it is expected (\cite{pt} Conjecture 3.14) that there exist unique integers $n_{g, \gamma}$, for every curve class $\gamma$, such that we can express the connected contribution as a finite sum:
\begin{equation}\label{pairsConj}
F_{P,\beta}(q) = \sum_{g \geq 0} \sum_{ r > 0, r | \beta} n_{g,\beta/r} \frac{(-1)^{g-1}}{r}((-q)^r - 2 + (-q)^{-r})^{g-1}.
\end{equation}
We refer to this conjectural identity as \emph{BPS rationality}. In fact integrality and vanishing for sufficiently large $g$ for fixed $\gamma$ are known (\cite{pt} Lemma 3.12 and Theorem 3.20). What remains to be proved in general is vanishing for $g<0$, although this is known in many cases (and always holds when the curve class $\beta$ is irreducible by \cite{ptBPS} Theorem 3). Moreover the connected Gromov-Witten partition function is obtained conjecturally (\cite{pt} Conjecture 3.3) by applying to each $F_{P, \beta}(q)$ the GW/DT change of variable \cite{mnop}
\begin{equation}\label{gwdtChange}
-q = e^{i u},
\end{equation}
and summing over all curve classes $\beta$, recovering the well-known GV expression for the GW partition function \cite{gv}
\begin{equation}\label{gv}
\sum_{g \geq 0} \sum_{\beta} \sum_{r > 0} n_{g,\beta} \frac{1}{r} (2\sin(ru/2))^{2g-2} Q^{r\beta}.
\end{equation}
In this paper we will often assume BPS rationality \eqref{pairsConj}, but we never need the above stable pairs/GV correspondence. It is important to point out that the original GW/DT correspondence has been proved for a vast class of Calabi-Yau threefolds by Pandharipande-Pixton \cite{PandaPixton}.

\subsection{} The original motivation discussed in \cite{mnop} for considering the GW/DT change of variable \eqref{gwdtChange} comes from mathematical physics. One of the aims of the present paper is to show a purely mathematical context in which the change of variable \eqref{gwdtChange} appears naturally within stable pairs theory. Our approach is based on the interpretation of certain generalised Donaldson-Thomas invariants, including stable pairs curve counts, as the monodromy of a flat connection on a formal principal bundle, which we describe in what follows.

Returning to the general setup of $\D \subset D^b(\Coh(X))$ for a moment, let us extend the function $\dt\!: \operatorname{ch}(K(\D)) \to \Q$ by $0$ to all $\Gamma$. Similarly we can allow arbitrary extensions of $Z$ from $\operatorname{ch}(K(\D))$ to all $\Gamma$. For such an extension we fix a ray $\ell \subset \C^*$ and consider the Poisson automorphism of (a suitable completion of) $\C[\Gamma]$ given by
\begin{equation*}
\mathbb{S}_{\ell}(Z) = \exp\big(\big[ \sum_{\alpha \in \Gamma, Z(\alpha) \in \ell} \dt(\alpha, Z) x_{\alpha}, - \big]\big)
\end{equation*}
(\cite{bridRH} Section 2.5, \cite{ks} Section 2.5). The Poisson automorphisms $\mathbb{S}_{\ell}$ are the most effective way to express a fundamental property of generalised DT invariants \cite{js, ks}: fixing a convex sector $V \subset \C^*$, the clockwise ordered product
\begin{equation}\label{wallcross}
\prod^{\to}_{\ell \subset V} \mathbb{S}_{\ell}(Z)
\end{equation}
remains constant in $Z$ as long as no rays $\ell$ with nontrivial $\mathbb{S}_{\ell}$ cross $\del V$ (this requires working in suitable truncations of $\C[\Gamma]$, see \cite{bridRH} Section 3.3, \cite{ks} Section 2.3). This property determines how generalised DT invariants depend on the choice of stability condition (locally described by $Z$). Rigorous definitions of $\mathbb{S}_{\ell}(Z)$ and of the local costancy of \eqref{wallcross} may be found in \cite{ks} and do not play a role in the present paper.
\subsection{} A striking similarity has been noticed between the previous result and the general theory of monodromy for meromorphic connections on a disc with an irregular singularity of Poincar\'e rank $1$ \cite{bt_stab, gmn, joy}. Following the approach developed in \cite{AnnaJ, fgs}, but ignoring the details of completion\footnote{In the apprach of \cite{AnnaJ, fgs}, fixing a generic $Z$, one introduces a formal parameter $s$ and an inverse system of principal bundles $P^j = \Aut(\C[\Gamma][s]/(s)^j) \times \C^*$, with connections $\nabla^j$, whose generalised monodromies converge to a formal power series version of $\{\mathbb{S}_{\ell}(Z), \ell \subset \C^*\}$, and are constant in a possibly decreasing sequence of open neighbourhoods $U_j$ of $Z$ in $\Hom(\Gamma, \C)$. But in general, without extra finiteness assumptions, we have $\cap_{j} U_j= \emptyset$.}, the insight is that one should regard the collection $\mathbb{S}_{\ell}(Z) \in \Aut(\C[\Gamma]), \ell \subset \C^*$ (with $\mathbb{S}_{\ell} \neq 1$) for fixed $Z$ as the generalised monodromy (``Stokes factors") around $t = 0$ of a meromorphic connection $\nabla(Z)$ on the trivial $\Aut(\C[\Gamma])$-bundle over a disc, with only a double pole at $t = 0$. Then the local constancy of $\prod^{\to}_{\ell \subset V} \mathbb{S}_{\ell}(Z)$ becomes precisely the statement that this generalised monodromy is constant, i.e. the family of connections $\nabla(Z)$ parametrised by $Z$ is isomonodromic. In fact the simplest situation one could consider takes place on $\C^* \subset \PP^1$ rather than a disc and is given by a family of connections of the form
\begin{equation*}  
\nabla(Z) = d - \left(\frac{Z}{t^2} + \frac{f(Z)}{t}\right)dt.
\end{equation*}
Here $Z$ is regarded as a diagonal element of $\operatorname{Der}(\C[\Gamma])$ acting by
\begin{equation*}
Z(x_{\alpha}) = Z(\alpha)x_{\alpha},
\end{equation*}
and $f(Z)$ takes values in $\operatorname{Der}(\C[\Gamma])$.

%In the approach of \cite{AnnaJ, fgs} one works over a ring of formal power series, such as $\C\pow{s}$. Each $\mathbb{S}_{\ell}(Z)$ is deformed (non-canonically) to an element of $\Aut(\C[\Gamma]\pow{s})$. For a fixed generic $Z$, fixing $j \geq 0$, the local constancy of \eqref{wallcross} holds in $\Aut(\C[\Gamma][s]/(s)^j)$ in a neighbourhood $U_j$ of $Z$ in $\Hom(\Gamma, \C)$, which in general may vanish as $j \to \infty$ (i.e. $\cap_{j\geq 0} U_j = \emptyset$). Similarly one constructs a connection $\nabla$ on the trivial $\Aut(\C[\Gamma]\pow{s})$ bundle such that its projections $\nabla^j$ to $\Aut(\C[\Gamma][s]/(s)^j)$ have constant generalised monodromy in $U_j$, given by the projection of \eqref{wallcross} to $\Aut(\C[\Gamma][s]/(s)^j)$. 

\subsection{} The connections $\nabla$ are unique and can be constructed rigorously \cite{AnnaJ, bt_stab, fgs}. But one could ask what is gained from taking this point of view. In order to show this, although it is not a priori clear why doing so should be useful, let us consider the (unique) normalised flat sections $Y(t, Z) \in \Aut(\C[\Gamma])$ of $\nabla(Z)$, i.e. their ``canonical solutions" (here we continue to suppress the details of completion\footnote{In the approach of \cite{AnnaJ, fgs}, $Y(t, Z)$ takes values in $\Aut(\C[\Gamma][[s]])$, and its reductions modulo $(s)^j$ give canonical sections for the inverse system of connections $\nabla^j$.}). They can be written explicitly in terms of a sum over graphs $T$, with vertices $v$ decorated by elements $\alpha_v \in \Gamma$, of the form
\begin{equation*}
Y(t, Z)(x_{\alpha}) = x_{\alpha} \exp\big( t^{-1} Z(\alpha) + \bra\alpha, \sum_{T} W_{T}(Z) H_{T}(t, Z)\ket\big).
\end{equation*}
Here the weights $W_{T}(Z)$ are monomials in $\C[\Gamma] \otimes_{\Z} \Gamma_{\Q}$, given essentially by the product of $\dt(\alpha_v) x_{\alpha_v}$ over vertices, while the functions $H_T(t, Z)$ are complicated iterated integrals (Fourier-Laplace transforms of ``periods"), holomorphic in $t$ with possible branch cuts\footnote{The appearence of branch cuts is expected: around a double pole, the ``canonical solutions" of a holomorphic differential equation are only defined in a sector.} along rays $\ell \subset \C^*$ with $\mathbb{S}_{\ell} \neq 1$. Explicit formulae are known. The sum over graphs is infinite and must be interpreted as a formal power series in one or more auxiliary formal parameters \cite{AnnaJ, fgs}. This aspect is irrelevant for the present paper: assuming BPS rationality \eqref{pairsConj}, we will prove a stronger convergence statement for the particular infinite sums which appear in our main result.
\subsection{} Returning to the example (1) of $\D = \bra \Coh_{\leq 1}(X)\ket_{\tr} \subset D^{b}(\Coh(X))$ we observe that it possesses a very special property: the locus in $\Gamma$ where $\dt$ does not vanish is ``Lagrangian", i.e. the restriction of the form $\bra - , - \ket$ to this locus is the zero form. Indeed $\dt$ vanishes except for classes $(n, \beta) \in H_{0} \oplus H_{2}$, which pair to zero under the intersection form of the threefold $X$. In this ideal case, in the expansion for a flat section $Y(t, Z)$ as a sum over graphs $T$, the only contributions come from single-vertex $T$ decorated by all possible classes $(n, \beta) \in H_0 \oplus H_2$,
\begin{equation*}
T = \overset{(n, \beta)}{\bullet}.
\end{equation*}  
We write $W_{(n, \beta)}(Z)$, $H_{(n, \beta)}(t, Z)$ for the monomial and function corresponding to $T$. Now fix a %\footnote{The general case of $\beta$ not necessarily primitive is only notationally more complicated.} 
primitive curve class $\beta$. The contribution to flat sections $Y(t, Z)$ of $\nabla(Z)$ from $\beta$ and all its multiples via the ``potential"
\begin{equation*}
\sum_{T} W_{T}(Z) H_{T}(t, Z) 
\end{equation*} 
is given by
%\begin{equation*}
$\sum_{k, n \in \Z} W_{ k(n, \beta)}(Z) H_{ k(n, \beta)}(t, Z)$, %\in H_0 \oplus H_2,
%\end{equation*}
corresponding to the graphs
\begin{equation}\label{genus0graphs}
T = \overset{k (n, \beta)}{\bullet}.
\end{equation}
We can turn off the dependence on the variables of $\C[\Gamma]$ by specialising each basic monomial $x_{\alpha} \mapsto 1$ (the idea of considering this specialisation first appeared in \cite{gaiotto} and is developed in \cite{bridRH}). We denote this specialisation by
\begin{equation}\label{p0Def}
p_{0,\beta}(t, Z) = \sum_{k,n \in \Z} W_{k(n, \beta)}(Z)|_{x_{\bullet} = 1} H_{k(n, \beta)}(t, Z) \in H_0 \oplus H_2.
\end{equation}
We will see that in fact $p_{0, \beta}(t, Z)$ is well defined as a formal power series in the variables $t$, $e^{2\pi i v_{\beta}}$. A key computation of Bridgeland and Iwaki, based on results of Bridgeland, shows that the potentials $p_{0,\beta}(t, Z)$ essentially recover the genus $0$ GV contribution (the $g =0$ term in \eqref{gv}), up to a simple partial differential equation. Let $\beta^{\vee}$ be the homology class dual to $\beta \in \Gamma$.%Recall the GV contribution of $n_{0,\beta}$ is given by
%\begin{equation*}
%n_{0, \beta} \sum_{r > 0} \frac{1}{r}(2\sin(r u/2))^{-2} Q^{r\beta}.
%\end{equation*}
\begin{thm}[Bridgeland, Bridgeland-Iwaki \cite{bridRH} Section 6.3]\label{bridThm} Let $\beta$ be a primitive curve class. Assume the torsion sheaves/GV conjectural identity \eqref{torsionConj}, and suppose $n_{0, k\beta}$ vanishes\footnote{We assume primitivity and the vanishing in order to simplify the statement. A similar result holds in general.} for $k>0$.  %Suppose the  invariants $n_{0, \gamma}$ vanish except for finitely many $\gamma$. 
Then the formal power series in $t$, $e^{2\pi i v_{\beta}}$ given by
\begin{equation*}
-\frac{1}{2\pi i}\del_t  \bra \beta^{\vee}, p_{0,\beta}(t, Z) \ket
\end{equation*}
and the Laurent series in $t$ 
\begin{equation*}
 n_{0, \beta} \del_{v_{\beta}} \sum_{r > 0} \frac{1}{r}(2\sin(r ((2\pi)^2 t))/2))^{-2} e^{2\pi i r v_{\beta}},
\end{equation*}
are well defined and agree except for two terms of order $t^{-2}$, $t^0$.
\end{thm}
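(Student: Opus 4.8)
The plan is to evaluate the specialised potential $p_{0,\beta}(t,Z)$ explicitly, feeding in on one side the enumerative input of \eqref{torsionConj} and on the other the closed formula for the single-vertex functions $H_{(n,\beta)}(t,Z)$ from \cite{bridRH}, and then to recognise the resulting double sum as the Laurent expansion of the genus $0$ GV term. First I would record the enumerative reduction. Since $\beta$ is primitive, a positive integer $k$ divides the class $(m,k\beta)$ exactly when $k\mid m$; combined with \eqref{torsionConj} and the hypothesis $n_{0,k\beta}=0$ for $k>1$, this yields the clean value $\dt(m,k\beta)=n_{0,\beta}/k^2$ when $k\mid m$ and $0$ otherwise, which is precisely why the active classes are those of the form $k(n,\beta)=(kn,k\beta)$ appearing in \eqref{genus0graphs}. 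Specialising $x_\bullet=1$, the single-vertex weight is $W_{k(n,\beta)}|_{x_\bullet=1}=\dt(kn,k\beta)\,k(n,\beta)=(n_{0,\beta}/k)(n,\beta)\in\Gamma_\Q$, and pairing with $\beta^\vee$, which is dual to $\beta$ so that $\langle\beta^\vee,(n,\beta)\rangle=1$, collapses the potential to
\begin{equation*}
\langle \beta^\vee, p_{0,\beta}(t,Z)\rangle = n_{0,\beta}\sum_{k>0}\sum_{n\in\Z}\frac{1}{k}\,H_{k(n,\beta)}(t,Z).
\end{equation*}

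Next I would insert the explicit Fourier--Laplace formula for $H_{k(n,\beta)}(t,Z)$ from \cite{bridRH}, in which $H_\gamma$ is an iterated (here single) integral of $e^{-Z(\gamma)/w}$ against a fixed kernel along the ray $\ell$ carrying $Z(\gamma)$. Here the relevant central charge is $Z(kn,k\beta)=k(v_\beta-n)$, so the index $n$ enters only through a shift of the $H_0$-tower. I would carry out the inner sum over $n\in\Z$ first: this is the heart of the Bridgeland--Iwaki computation. Summing the integrand over the lattice shift periodises the exponential and, after applying $-\tfrac{1}{2\pi i}\del_t$, turns the single-vertex integral into the classical partial-fraction expansion of the cosecant squared,
\begin{equation*}
\sum_{n\in\Z}\frac{1}{(x-n)^2} = \frac{\pi^2}{\sin^2(\pi x)},
\end{equation*}
which is $(2\sin(\,\cdot\,))^{-2}$ up to normalisation, with argument proportional to $k(2\pi)^2 t$ and the factor $e^{2\pi i k v_\beta}$ produced by the $v_\beta$-part of $Z$. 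This simultaneously shows that the inner sum, a priori only conditionally convergent, is well defined as a formal power series in $t$ and $e^{2\pi i v_\beta}$, and it is the step where the change of variable $u=(2\pi)^2 t$ of \eqref{gwdtChange} emerges, together with the matching of $-\tfrac{1}{2\pi i}\del_t$ on the left with $\del_{v_\beta}$ on the right, both bringing down the same factor $2\pi i\,r$ after the periodisation.

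Summing the remaining series over $k$ and relabelling $k=r$ then reproduces $n_{0,\beta}\sum_{r>0}\tfrac{1}{r}(2\sin(r(2\pi)^2 t/2))^{-2}e^{2\pi i r v_\beta}$, the weight $1/k$ matching the prefactor $1/r$; this is the $g=0$ term of \eqref{gv} after $\del_{v_\beta}$. Finally I would isolate the discrepancy: in the cosecant identity the $n=0$ summand contributes the double pole $1/x^2$, responsible for the $t^{-2}$ term, while the residual constant of the periodisation contributes the $t^0$ term. These are exactly the ``perturbative'' pieces of the Laurent expansion $(2\sin(\,\cdot\,/2))^{-2}=x^{-2}+\tfrac{1}{12}+O(x^2)$ that the Fourier--Laplace transform does not see, since the double pole of $\nabla(Z)$ is carried by the $Z/t^2$ part of the connection and the constant is fixed only up to the regularisation of the divergent lattice sum. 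Hence the two series agree except for the stated terms of order $t^{-2}$ and $t^0$.

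The main obstacle will be the bookkeeping of the inner sum over $n$: making the conditionally convergent lattice sum rigorous as a formal power series, pinning down the kernel of the Fourier--Laplace integral precisely enough that the periodisation produces exactly $\csc^2$ (rather than a derivative or a primitive of it), and thereby identifying unambiguously that it is the $n=0$ term together with the regularisation constant, and no others, that account for the $t^{-2}$ and $t^0$ discrepancy. Once the kernel normalisation is fixed this reduces to the two classical expansions above, but getting that normalisation and the two anomalous coefficients exactly right is the delicate point, and is where the cited computation of Bridgeland and Bridgeland--Iwaki does the real work.
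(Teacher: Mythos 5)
Your opening reductions coincide with the paper's own (Lemma \ref{genus0Lem}): the enumerative input $\dt(kn,k\beta)=n_{0,\beta}/k^2$ and the collapse, after pairing with $\beta^{\vee}$, to $n_{0,\beta}\sum_{k>0}\frac{1}{k}\sum_{n\in\Z}H_{k(n,\beta)}(t,Z)$ match the paper (you omit the $\pm$-symmetrisation of Lemma \ref{conformalLem}, which only affects signs and replaces $H$ by $\widehat{H}$). The genuine gap is in your central step, the evaluation of the inner sum over $n$. The integer $n$ enters only through the central charge $Z(k(n,\beta))=k(v_{\beta}-n)$, i.e. through the \emph{phase} of the Fourier--Laplace integral, not as a shift of the argument of any rational integrand; so summing over $n$ does not produce the partial-fraction series $\sum_n (x-n)^{-2}$ in a variable proportional to $t$. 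What it produces, after rotating the contour and substituting $\sigma\mapsto\sigma^{-1}$, is a Dirac comb,
\begin{equation*}
\sum_{n\in\Z}e^{ik\sigma(v_{\beta}-n)} = \frac{2\pi}{k}\,e^{ik\sigma v_{\beta}}\sum_{n\in\Z}\delta\big(\sigma-\tfrac{2\pi}{k}n\big),
\end{equation*}
which localises the $\sigma$-integral and yields, for each fixed $k$, the expression $\frac{1}{\pi k}\sum_{n>0}\frac{(2\pi/k)t}{1+((2\pi/k)nt)^2}\,e^{2\pi i n v_{\beta}}$ of Lemma \ref{genus0Lem}: a sum of Lorentzians in $t$ carrying \emph{every} power $e^{2\pi i n v_{\beta}}$ (note the $k$ cancels in the exponent). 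This cannot equal your claimed fixed-$k$ output $\frac{1}{k}(2\sin(k(2\pi)^2 t/2))^{-2}e^{2\pi i k v_{\beta}}$, which contains the single power $e^{2\pi i k v_{\beta}}$; comparing coefficients of $e^{2\pi i v_{\beta}}$ already gives a contradiction, so the term-by-term matching $k=r$ on which your plan rests is false. In the true matching the indices are transposed: the GV degree $r$ corresponds to the comb/Fourier index $n$ (both label powers of $e^{2\pi i v_{\beta}}$), while the multiple-cover index $k$ must be summed away entirely. The paper does this by expanding each Lorentzian in powers of $\lambda=2\pi t$, recognising $\li_{-2p}(e^{2\pi i v_{\beta}})$ in the $n$-sum and $\sum_k k^{-2p-2}=\zeta(2p+2)$ in the $k$-sum, and invoking Euler's formula so that these zeta values become exactly the Bernoulli numbers occurring in the Laurent expansion of $(2\sin(ru/2))^{-2}$: the cosecant in $t$ is created by the $k$-sum, not by the $n$-sum. (A partial-fraction mechanism does exist, but for the $k$-sum: $\sum_{k\geq1}\frac{2t}{k^2+(2\pi n t)^2}=\frac{\coth(2\pi^2 n t)}{2n}-\frac{1}{4\pi^2 n^2 t}$, which after $-\frac{1}{2\pi i}\del_t$ gives a $1/\sinh^2$-type function of $t$ for each fixed $n$.)

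A consequence is that your identification of the two exceptional terms is also misplaced. They are not ``the $n=0$ summand of the cosecant identity'' plus ``the residual constant of the periodisation'': in the paper's proof the DT side, after applying $-\frac{1}{2\pi i}\del_t$, contains only the powers $t^{2g-2}$ with $g\geq1$, whose coefficients are zeta values times polylogarithms, while the GV side in addition carries the genus $0$ double pole proportional to $t^{-2}\del_{v_{\beta}}\li_3(e^{2\pi i v_{\beta}})$ and has its genus $1$ term constant in $t$; the stated discrepancies of order $t^{-2}$ and $t^{0}$ are precisely the $g=0$ and $g=1$ terms of this genus expansion. You correctly foresee that the delicate point is the bookkeeping of the inner sum, but the structure you propose for that bookkeeping fails at the first coefficient check, so the gap is one of substance rather than of normalisation.
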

Setting $Q^{\beta} = e^{2\pi i\omega_{\C} \cdot \beta} = e^{2\pi iv_{\beta}}$ is precisely how GV contributions are written in the physics literature \cite{gv}. In any case Theorem \ref{bridThm} shows how the conjectural genus $g = 0$ GV contribution, thought of as a formal power series, can be computed from the asymptotics of  the flat sections of $\nabla$ as $t \to 0$. %, by acting with $\del_t$, $\del_{v_{\beta}}$ on $\sum_{\beta} p_{0,\beta}(t, Z)$ and on the GV contribution. 
We hope that this result clearly shows an interesting use\footnote{A relation between Theorem \ref{bridThm} and classical ODEs is discussed in \cite{jj}. A different type of relation between the connection $\nabla$ and curve-counting in described in \cite{fgs, fs}.} of the connection $\nabla$ and of its flat sections. 
\subsection{} The principal aim of this paper is to describe an analogue of Theorem \ref{bridThm}, which is also valid for arbitrary genus. The GW/DT change of variable \eqref{gwdtChange} will emerge naturally in the process and will be closely related to a Fourier-Laplace transform.

For this we turn to Toda's construction %\footnote{} 
(2), i.e. the case of $\D = \bra \olo_X, \Coh_{\leq 1}(X)\ket_{\tr} \subset D^{b}(\Coh(X))$. Unfortunately in this case the non-vanishing locus of $\dt$ inside $\Gamma$ is no longer ``Lagrangian": for example, for all $(-n, -\beta, 1)\in H_0\oplus H_2 \oplus H_6$ we have $\dt(-n, -\beta, 1) = P_{n, \beta}$, and 
\begin{equation*}
\bra (-n, -\beta, 1), (-n', -\beta', 1)\ket = n - n'. 
\end{equation*}
Because of this the contribution of a curve class $\beta$ to flat sections of $\nabla$ via the potential
%\begin{equation*}
$\sum_{T} W_{T}(Z) H_{T}(t, Z)$ 
%\end{equation*}
is much more involved. The corresponding graphs $T$ may contain an arbitrary number of edges, for example of the form 
\begin{equation*} 
\overset{(-n, - \beta,1)}{\bullet} \longrightarrow \overset{(-n', 0,0)}{\bullet}.
\end{equation*}
Establishing an analogue of Theorem 1 requires choosing a suitable subset of all the possible contributions and relating this to GV theory. We will show that this can be achieved by looking at the contribution of all ``framed" single-vertex graphs
\begin{equation}\label{genusGgraph}
T = \overset{\pm(-n, - \beta,1)}{\bullet} \longrightarrow \overset{\pm[\olo_X]}{\bullet},
\end{equation} 
(for all choices of signs), where we write $[\olo_X] \in \Gamma$ for the class of the structure sheaf. While the closest analogue of the genus $0$ case is given by the single-vertex graphs
\begin{equation}\label{genusGUnframedGraph}
T = \overset{\pm(-n, - \beta,1)}{\bullet},  
\end{equation}
for genus $g > 1$ this choice leads to a serious difficulty with signs. Framing by $\olo_X$ fixes the wrong signs.

\subsection{}\label{vanishingSec} Our restriction to summing only over the graphs \eqref{genusGgraph} loses a lot of the information contained in $\nabla$, but it is not arbitrary. 

Firstly it makes sense to restrict to decorated graphs $T$ such that the decoration at each vertex $v$ is of the form $\pm (-n_v, -\beta_v, r_v)$ for positive rank $r_v > 0$. This is because we can sum over all rank $0$ decorations, using Theorem \ref{bridThm}, and simply obtain copies of the genus $0$ GV contributions of various curve classes $\beta_v$. 

Secondly we can consider the asymptotic behaviour of the contribution of such a positive rank graph $T$, in the limit when we rescale $G = Z(\olo_X)$ to $\lambda G$, for $\lambda > 0$, and let $\lambda \to \infty$ (Toda \cite{todaConifold} equation (1) considers this rescaling and calls it a ``large volume" limit). We show in Lemma \ref{vanishingLem} that the contribution of a positive rank graph $T$ with $m$ vertices dacays at least as 
\begin{equation*}
\big(\prod_v \frac{1}{\sqrt{2 r_v}} \big)\lambda^{-m/2}
\end{equation*}
when $\lambda \to \infty$, up to a factor independent of $\lambda$ and the $r_v$. Thus, among all positive rank graphs $T$, the leading contribution beyond that of the single-vertex graphs comes from those of the form \eqref{genusGgraph}.

While in the present paper we restrict to rank $1$, it may be interesting to try and extend our analysis to all positive ranks. Note that Toda \cite{todaHall} studies higher rank analogues of stable pairs invariants and proves a structure result for their generating function.  
\subsection{} We can now describe our main result. Fix a curve class $\beta$. Following the genus $0$ case, let us write 
\begin{equation*}
W_{(-n, - \beta,1), [\olo_X]}(Z), H_{(-n,- \beta,1), [\olo_X]}(t, Z)
\end{equation*}
for the monomial and function corresponding to the framed single-vertex graph \eqref{genusGgraph}. We will see that the weight $W_{(-n, - \beta,1), [\olo_X]}$ is essentially the same as the invariant $\dt(-n, - \beta,1) = P_{n, \beta}$ (in particular it is constant for our choices of $Z$), so there is a well defined \emph{connected} contribution $W'_{(-n, -r\beta,1), [\olo_X]}$ corresponding to the connected invariant $P'_{n, \beta}$. Then the higher genus analogue of $p_0(t, Z)$ is given by
\begin{align*}
&p_{\beta}(t, Z)\\
&= \sum_{n \in \Z}  \sum_{\pm} W'_{\pm(-n, - \beta,1), [\olo_X]}|_{x_{\bullet} = 1} H_{\pm(-n, - \beta,1), [\olo_X]}(t, Z)\\& \in H_0 \oplus H_2 \oplus H_6,
\end{align*} 
where as before we specialise each basic monomial $x_{\alpha} \mapsto 1$. Assuming BPS rationality \eqref{pairsConj} we will see that there is a natural, finite genus expansion for $p_{\beta}(t, Z)$. 
%\begin{equation*}
%\bra \beta^{\vee}, p_{\beta}(t, Z) \ket = \hat{p}_{0, \beta}(t, Z) + \sum_{g > 0} p_{g, \beta}(t, Z).
%\end{equation*}
The genus $0$ term is a divergent integral, but admits a canonical choice of regularisation, while each $g > 0$ term is in fact a well defined \emph{function} of $t \in \R$. We are interested in the behaviour of these functions near $t = \infty$, where we regard $t$ as a real variable, so we will use a real coordinate $\epsilon$ with $\epsilon = t^{-1}$. Introduce a differential operator 
\begin{equation*}
\mathcal{L} = -i \del_{v_{\beta}} \eps^{-1} (\del_{\eps} - \eps^{-1}).
\end{equation*}
Our main result shows that the conjectural genus $g > 0$ Gopakumar-Vafa contribution of each curve class $\beta$ to the Gromov-Witten partition function, which according to \eqref{gv} is given by
\begin{equation*}
\sum_{r > 0, r|\beta} n_{g,\beta/r} \frac{1}{r} (2\sin(ru/2))^{2g-2} e^{i v_{\beta}},
\end{equation*}
can be reconstructed from the asymptotics of flat sections of $\nabla$ as $t \to \infty$. We do not actually need to assume the stable pairs/GV conjecture, but only BPS rationality \eqref{pairsConj}. %, by acting on the potential $p_{g, \beta}(t, Z)$ with powers of $\mathcal{L}$. %We write $[\olo_x] \in \Gamma$ for the class of a point.
\begin{thm}\label{mainThm} Assume BPS rationality \eqref{pairsConj}. Regard $t = \eps^{-1}$ as a real variable. Then for each curve class $\beta$ there is a natural, finite genus expansion
\begin{equation*}
\lim_{G \to 0} \bra \beta^{\vee}, p_{\beta}(t, Z)\ket =  \hat{p}_{0, \beta}(t, Z) + \sum_{g > 0} p_{g, \beta}(t, Z),
\end{equation*}
where $\hat{p}_{0, \beta}(t, Z)$ is a divergent integral, admitting a canonical choice of regularisation, while each genus $g > 0$ term 
%\begin{equation*}
$p_{g, \beta}(t, Z)$
%\end{equation*}
is a well defined \emph{function} of $t = \eps^{-1}$, such that for all $j \geq 1$ we have%\todo{Fix statement.}
\begin{align*}
&\lim_{\eps\to 0}\Big(\mathcal{L}^{j}  p_{g, \beta}(\eps^{-1}, Z)\\
&- \sum^{\lfloor (j-1)/2 \rfloor}_{h = 0}  (-1)^{j+h}\frac{(2h)!}{\eps^{2h+1}} \del^{j-2h-1}_{u}\big(e^{iu v_{\beta} } i\del_{u}\sum_{r>0, r|\beta} n_{g, \beta/r}\frac{1}{r}(2\sin(ru)/2)^{2g-2} \big)\big)|_{u = 0} \Big)\\
&=  - \frac{(-1)^j}{4}  \del^{j}_{u}\big( e^{iu v_{\beta}} i\del_{u}\sum_{r>0, r|\beta} n_{g, \beta/r}\frac{1}{r}(2\sin(ru)/2)^{2g-2} \big)|_{u = 0}.
\end{align*}
%\begin{align*}
%& \lim_{\eps \to 0}\Big(\mathcal{L}^{j}  p_{g, \beta}(\eps^{-1}, Z)\\
%&= -\sum^{\lfloor j/2 \rfloor}_{h = 1}(-1)^{j-2h}\big(\del^{2h}_{\sigma}(\kappa_{\eps}(\sigma)) \del^{j-2h}_{\sigma}(e^{i\sigma(G + v_{\beta})}\frac{1}{r}(2\sin(r\sigma)/2)^{2g-2})\big)|_{\sigma = 0}\\
%& - (-1)^j \int d\sigma\kappa_{\eps}(\sigma) \del^j_{\sigma}( e^{i\sigma(G + v_{\beta})}\frac{1}{r}(2\sin(r\sigma)/2)^{2g-2})\\
%&+ \sum^{\lfloor j/2 \rfloor}_{h = 1}(-1)^{j-h} \frac{(2h)!}{\eps^{2h+1}} \del^{j-2h}_{u}\Big( \sum_{r |\beta} \big(i\del_{u}n_{g, \beta}\frac{1}{r}(2\sin(ru)/2)^{2g-2}\big)\big(\del_{u}e^{i u r v_{\beta} }\big)\Big)\big|_{u = 0} \Big)\\
%&=  - \frac{(-1)^j}{4}  \del^{j}_{u}\Big( \sum_{r | \beta}\big(i\del_{u}n_{g, \beta}\frac{1}{r}(2\sin(ru)/2)^{2g-2}\big)\big(\del_{u}e^{i u r v_{\beta} }\big)\Big)\big|_{u = 0}. 
%\end{align*}
\end{thm}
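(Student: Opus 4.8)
The plan is to run the strategy behind Theorem~\ref{bridThm} with BPS rationality \eqref{pairsConj} in place of its single genus~$0$ input, and to read off the genus~$g$ terms from the large~$t$ (small~$\eps$) behaviour of the flat section rather than from a formal expansion. First I would make the framed single-vertex contribution explicit. In the graph-sum formalism the edge of \eqref{genusGgraph} contributes the bracket factor $\bra(-n,-\beta,1),[\olo_X]\ket=-n$, while the two vertices contribute $\dt(-n,-\beta,1)=P_{n,\beta}$ and the fixed invariant $\dt([\olo_X])$; passing to the connected weight $W'$ replaces $P_{n,\beta}$ by $P'_{n,\beta}$. The function $H_{(-n,-\beta,1),[\olo_X]}(t,Z)$ is the associated two-vertex iterated Fourier-Laplace integral, governed by the central charges $Z(-n,-\beta,1)=G+v_\beta-n$ and $Z([\olo_X])=G$. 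I would record these expressions together with the sign conventions coming from the twist $(-1)^{\bra\cdot,\cdot\ket}$ of $\C[\Gamma]$.

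Next I would sum over $n$ and the two signs inside the integral. The decisive feature is that the edge factor $-n$ turns $\sum_n P'_{n,\beta}q^n=F_{P,\beta}(q)$ into $\sum_n(-n)P'_{n,\beta}q^n=-q\del_q F_{P,\beta}(q)$, which is exactly the source of the derivative $i\del_u g_\beta$ in the statement: under $-q=e^{iu}$ one has $q\del_q=-i\del_u$, so this becomes $i\del_u F_{P,\beta}(-e^{iu})$. At the same time the $v_\beta$-shift in $Z(-n,-\beta,1)$ supplies the phase $e^{iuv_\beta}$, and the sign $(-1)^n$ needed to pass from $q$ to $-e^{iu}$ is produced by the framing sign structure --- this is the whole point of framing by $\olo_X$, and is what the bare graph \eqref{genusGUnframedGraph} gets wrong for $g>1$. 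Carrying out the $G\to 0$ limit, I obtain a representation $\lim_{G\to 0}\bra\beta^\vee,p_\beta(t,Z)\ket=\int_\gamma K(t,u)\,e^{iuv_\beta}\,i\del_u F_{P,\beta}(-e^{iu})\,du$ for an explicit kernel $K(t,u)$ read off from the framed two-vertex structure; the emergence of $-q=e^{iu}$ from this transform is the higher-genus analogue of the Bridgeland-Iwaki computation behind Theorem~\ref{bridThm}.

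I would then insert BPS rationality. Since $(-q)^r-2+(-q)^{-r}=-(2\sin(ru/2))^2$ under $-q=e^{iu}$, the series \eqref{pairsConj} expands as $F_{P,\beta}(-e^{iu})=\sum_{g\ge 0}g_\beta(u)$ with $g_\beta(u):=\sum_{r\mid\beta}n_{g,\beta/r}\frac1r(2\sin(ru/2))^{2g-2}$. The $g=0$ summand has a double pole at $u=0$, so its integral diverges: this is $\hat p_{0,\beta}(t,Z)$, to be assigned a value by a canonical finite-part regularisation. For $g>0$ the summand vanishes to order $2g-2$ at $u=0$, so $i\del_u g_\beta$ is regular there and each $p_{g,\beta}(\eps^{-1},Z)$ is a genuine function of $\eps=t^{-1}$; this yields the asserted finite genus expansion.

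The main obstacle is the final asymptotics. I would analyse $\mathcal{L}^j p_{g,\beta}$ as $\eps\to0$ using the identity $\eps^{-1}(\del_\eps-\eps^{-1})=\del_\eps\circ\eps^{-1}$ together with the fact that $-i\del_{v_\beta}$ acts as multiplication by $u$ on the phase $e^{iuv_\beta}$ inside the integral; thus $\mathcal{L}^j p_{g,\beta}=\int[(\eps^{-1}(\del_\eps-\eps^{-1}))^j K]\,u^j\,e^{iuv_\beta}i\del_u g_\beta\,du$, each application of $\mathcal{L}$ multiplying the integrand by a factor $u$ and applying an $\eps$-derivative to the kernel. Repeated integration by parts in $u$ against the (oscillatory) kernel then converts the accumulated powers of $u$ and $\eps^{-1}$ into $u$-derivatives of $e^{iuv_\beta}i\del_u g_\beta$ localised at the endpoint $u=0$: the boundary terms assemble into the singular corrections $(-1)^{j+h}(2h)!\,\eps^{-(2h+1)}\del_u^{j-2h-1}(\cdots)|_{u=0}$, while the surviving integral limits to the finite part $-\frac{(-1)^j}{4}\del_u^{j}\big(e^{iuv_\beta}i\del_u g_\beta(u)\big)|_{u=0}$. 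The delicate part is making this localisation rigorous and pinning down the exact constants --- the factorials $(2h)!$, the appearance of only the odd poles $\eps^{-(2h+1)}$ (which I expect to reflect a parity/reality property of $K$), and the overall normalisation $-\frac14$ with the correct signs. This requires a careful Watson's-lemma / stationary-phase treatment of the specific kernel $K$, and is precisely the step where the framing of \S\ref{vanishingSec} is needed to fix the signs.
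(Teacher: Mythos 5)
Your first three paragraphs follow the paper's own route: the explicit framed two-vertex weight $(-1)^n n P_{n,\beta}$ with its iterated Fourier--Laplace integral (this is Lemma \ref{framedPotLem}, up to a sign slip --- with the paper's conventions the edge bracket is $+n$, not $-n$), the passage to connected invariants, the $G \to 0$ limit in which the inner $\tau$-integral collapses to $\tfrac12$, the conversion of the edge factor into $i\del_\sigma$ and of the twist sign $(-1)^n$ into the substitution $-q = e^{-i\sigma}$, and the resulting finite genus expansion with a divergent but Hadamard-regularisable $g=0$ term (Proposition \ref{genusGProp} and Lemma \ref{genusExpansionLem}). Up to bookkeeping this is exactly the paper's argument for the first half of the statement.

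The genuine gap is the last step, which is where the content of the theorem lies. The paper's mechanism is concrete: the kernel you leave as an unspecified $K(t,u)$ is one half of the Poisson kernel $\kappa_{\eps}(u) = \tfrac{1}{\pi}\tfrac{\eps}{\eps^2+u^2}$ for the upper half-plane, and everything follows from its first-order PDE \eqref{basicPDE}, which converts $\mathcal{L}^j$ into $u$-derivatives of the kernel (Corollary \ref{DiffOp}); after that, $j$ integrations by parts, the Taylor expansion $\kappa_{\eps}(u) = \sum_{h} 2(-1)^h \eps^{-(2h+1)} u^{2h}$ (whose evenness is the source of the odd poles $\eps^{-(2h+1)}$ and the factors $(2h)!$), and the half-line limit $\kappa_{\eps} \to \delta$ (which together with the prefactor $\tfrac12$ gives the $\tfrac14$) produce every constant in the statement. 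You replace all of this by an appeal to ``a careful Watson's-lemma / stationary-phase treatment'', i.e.\ precisely the part that \emph{is} the theorem is left unproved. Moreover, the deferred step is not routine: your (correct) identity $\mathcal{L}^j p_{g,\beta} = \int \big[(\eps^{-1}(\del_{\eps}-\eps^{-1}))^j K\big]\, u^j e^{iuv_{\beta}} i\del_u g_{\beta}\, du$, combined with the kernel's PDE, yields the integrand $u^j (u^{-1}\del_u)^j \kappa_{\eps}$, which coincides with the $\del_u^j \kappa_{\eps}$ of Corollary \ref{DiffOp} only for $j=1$; for $j \geq 2$ the two differ by commutator terms such as $-u^{-1}\del_u \kappa_{\eps}$, whose contribution has even-pole ($\eps^{-2}$-type) singular behaviour not present in the stated formula. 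So you can neither wave the localisation through nor simply splice in the paper's boundary-term computation: you would have to pin down this kernel calculus exactly and track such terms, and that is the work your sketch omits. As it stands, none of the factors $(2h)!$, the restriction to odd poles, or the normalisation $-\tfrac{(-1)^j}{4}$ is derived.
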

It follows immediately that all the coefficients of the Taylor expansion of each conjectural genus $g > 0$ GV contribution of $\beta$ at $u = 0$ are determined by the asymptotics of flat sections of $\nabla$ as $t \to \infty$ (by induction on powers $\mathcal{L}^j$). Note that here we exclude the (singular) genus $0$ case. Moreover the result gives no information on the genus $g = 1$ GV contribution, since that is constant in $u$. In fact the approach of Theorem \ref{mainThm} can be extended to both cases. For genus $0$ see Remark \ref{genus0Rmk}. Recovering the genus $g = 1$ contribution involves the ``unframed" potential corresponding to the graphs \eqref{genusGUnframedGraph}, and is discussed in detail after the proof of Theorem \ref{mainThm}. 

Comparing the proofs of Theorems \ref{bridThm} and \ref{mainThm}, we find that the conjectural torsion sheaves/GV identity \eqref{torsionConj} is replaced by BPS rationality \eqref{pairsConj}. But in fact the two conjectures are entirely equivalent, as shown by Toda (\cite{todaKyoto} Theorem 6.4).  A key role in the proof of Theorem \ref{mainThm} is played by properties of the Poisson kernel $\kappa_{\eps}(u)$, i.e. of the integral kernel for the Laplacian on the upper half plane with Dirichlet boundary conditions; we express each genus $g > 0$ contribution $p_g(\eps^{-1}, Z)$ through convolution with $\kappa_{\eps}(u)$.

\subsection{} The promised interpretation of the GW/DT change of variable $-q = e^{iu}$ emerges naturally in the course of the proof Theorem \ref{mainThm} (see Remark \ref{dtgwRmk}). The essential point is that, when taking flat sections of the connection $\nabla$, the contribution of an invariant $P'_{n, \beta}$ appears as the coefficient of a Fourier-Laplace integral with respect to   
\begin{equation*}
e^{i \sigma Z(-n, -\beta, 0, 1)} d\sigma = e^{i\sigma(G +  v_{\beta} - n)} d\sigma, 
\end{equation*}
involving the Poisson kernel $\kappa_{1/t}(\sigma)$. Such Fourier-Laplace integrals arise classically when taking flat sections around a double pole, as we are doing. The integral is then localised around $\sigma = 0$ in the limit $t \to \infty$ (as the Poisson kernel $\kappa_{1/t}(\sigma)$ concentrates at $0$), and the phase $e^{- i n \sigma}$ replaces $(- q)^n$.  

\subsection{} In Section \ref{generalSec} we briefly recall some general results on the connection $\nabla$. Section \ref{genus0Sec} offers a proof of Theorem \ref{bridThm} which is closer to our approach in the higher genus case. Toda's construction is reviewed in Section \ref{todaSec}. Our main result Theorem \ref{mainThm} is proved in Section \ref{genusGSec} (the special case of genus $g = 1$ is discussed in detail at the end of the Section). 

Finally we note that recent works of Maulik and Toda \cite{mt, todaGV} suggest the possibility of obtaining a higher genus analogue of Theorem \ref{bridThm} working with the category $\D = \bra \Coh_{\leq 1}(X)\ket_{\tr} \subset D^{b}(\Coh(X))$, but considering instead a deformation of the connection $\nabla$, of a type described in \cite{fs} Section 4.1. It might be interesting to develop this approach and to compare it to the one of the present paper, based on stable pairs.\\

\noindent\textbf{Acknowledgements.}  I am very grateful to Anna Barbieri, Tom Bridgeland, Jacopo Scalise, and Richard Thomas for important comments and suggestions about this work. The research leading to these results has received funding from the European Research Council under the European Union's Seventh Framework Programme (FP7/2007-2013) / ERC Grant agreement no. 307119.
\section{Some basic results on $\nabla$}\label{generalSec}
  
In this section we briefly discuss some general results on the connection $\nabla$ and its flat sections, following the approach developed in \cite{fgs} (see also \cite{AnnaJ, fs, j}), but ignoring the details of completion. They are not relevant for the purposes of this paper. We assume for definiteness we are in the setup (1) or (2) described in Section \ref{backSec}, although the results do not depend on the details of these cases. Importantly, we work with the ``symmetric" version of $\nabla$ described in \cite{fgs} sections 2.4, 3.11, 4.6, which is obtained by imposing the relation $\dt(\alpha, Z) = \dt(-\alpha, Z)$ (geometrically, this symmetry comes from the shift functor $[1] \in \Aut(\D)$). 

Recall $\nabla(Z)$ is a meromorphic connection on the trivial $\Aut(\C[\Gamma])$-bundle\footnote{In general, as we explained, one should work with the inverse system of bundles and connections $(P^i, \nabla^i)$, see \cite{AnnaJ, fgs}.} over $\PP^1$, $\Aut(\C[\Gamma])$ denoting the automorphism group of $\C[\Gamma]$ as a commutative, associative algebra endowed with the twisted product (\cite{fgs} Section 4). More explicitly we have
\begin{equation*}
\nabla(Z) = d - \left(\frac{Z}{t^2} + \frac{f(Z)}{t} \right)dt
\end{equation*}
where $Z \in \Hom(\Gamma, \C)$ is regarded as a diagonal element of the derivation module $\operatorname{Der}(\C[\Gamma])$, and $f(Z) \in \operatorname{Der}(\C[\Gamma])$. So $\nabla$ has an irregular singularity at $t = 0$, and a regular singularity at $t = \infty$. Because of the irregular singularity at $t = 0$, it is well known that in general it is not possible to find a flat section of $\nabla$ in a whole neighbourhood of $t = 0$, even after passing to a suitable ramified cover. The obstruction comes from the generalised monodromy around $t=0$, that is from rays (``Stokes rays") 
\begin{equation*}
\ell = \R_{ > 0} Z(\alpha) \subset \C^*
\end{equation*}
such that the generalised monodromy automorphism (``Stokes factor") $\mathbb{S}_{\ell}(Z)$ is nontrivial (\cite{fgs} Section 4.5). Fixing an open convex sector $\Sigma$ between two such rays\footnote{Rays of this type could be dense in a region, and in general the statement only makes sense for each $(P^i, \nabla^i)$.}, one can show the existence of a unique flat section which is asymptotic to
%\begin{equation*}
$\exp(t^{-1}Z)$
%\end{equation*}
as $t \to 0$ in $\Sigma$ (\cite{fgs} Section 4.3). This solution has polynomial growth as $t \to \infty$ in $\Sigma$. 

In the present case such flat sections can be written explicitly. They are given by the restriction to sectors $\Sigma$ of an $\Aut(\C[\Gamma])$-valued function of the form
\begin{equation}\label{flatSections}
Y(t, Z)(x_{\alpha}) = x_{\alpha} \exp\big( t^{-1} Z(\alpha) + \bra\alpha, \sum_{T} W_{T}(Z) H_{T}(t, Z)\ket\big) 
\end{equation}
(\cite{fgs} sections 3.6, 4.3). Here we sum over oriented graphs $T$ whose vertices are decorated by elements of $\Gamma$. For fixed $Z$, each $H_{T}(t, Z)$ is a holomorphic function with branch cuts in the variable $t\in \C^*$, whose restrictions to sectors $\Sigma$ are holomorphic, while the weights $W_T(Z)$ lie in $\C[\Gamma]\otimes_{\Z}\Gamma_{\Q}$. The general formulae for $W_T(Z)$, $H_T(t, Z)$ are quite involved, but as explained in Section \ref{backSec} we are only interested in the precise expression for the initial (``linear and quadratic") terms in the sum over graphs, corresponding to 
\begin{align*}
T(\alpha) &= \overset{\alpha}{\bullet},\quad%\\
T(\alpha, \alpha') = \overset{\alpha}{\bullet} \longrightarrow \overset{\alpha'}{\bullet}. 
\end{align*}
Let us write $W_{\alpha}(Z)$, $H_{\alpha}(t, Z)$, respectively $W_{\alpha, \alpha'}(Z)$, $H_{\alpha, \alpha'}(t, Z)$ for the corresponding weights and functions. 
\begin{lem} We have
\begin{align*}
W_{\alpha}(Z) &= \dt(\alpha, Z) \alpha \otimes x_{\alpha},\\
H_{\alpha}(Z) &= \frac{1}{2\pi i}\int_{\R_{>0}Z(\alpha)}\frac{dz}{z}\frac{t}{z-t} e^{- Z(\alpha)/z},
\end{align*}
respectively
\begin{align*}
W_{\alpha,\alpha'}(Z) &= \dt(\alpha, Z) \dt(\alpha',Z)\bra\alpha,\alpha'\ket \alpha \otimes x_{\alpha} x_{\alpha'}\\
&= \dt(\alpha, Z)\dt(\alpha',Z) (-1)^{\bra\alpha, \alpha'\ket} \bra\alpha,\alpha'\ket \alpha \otimes x_{\alpha + \alpha'},\\
H_{\alpha,\alpha'}(t, Z) &= \frac{1}{2\pi i}\int_{\R_{>0}Z(\alpha)}\frac{d z}{z}\frac{t}{z-t} e^{- Z(\alpha)/z} H_{\alpha'}(z, Z).
\end{align*}
\end{lem}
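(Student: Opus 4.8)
The plan is to read off both orders from the defining property of $Y(t,Z)$, namely that it is the distinguished flat section of $\nabla(Z) = d - (Z/t^2 + f(Z)/t)\,dt$ with asymptotics $Y \sim \exp(t^{-1}Z)$ as $t \to 0$. Equivalently, following \cite{fgs}, $Y$ solves the nonlinear Riemann-Hilbert problem on $\PP^1$ whose jumps across the Stokes rays $\ell = \R_{>0}Z(\alpha)$ are the automorphisms $\mathbb{S}_\ell(Z)$, and the expansion \eqref{flatSections} is precisely the Picard (Neumann) series of this problem: each application of the governing integral operator attaches one more vertex, so the single-vertex graphs give the first Picard iterate and the two-vertex graphs give the second. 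I would therefore organise the computation by number of vertices, writing $Y = \exp(\Phi)$ with $\Phi = \Phi^{(0)} + \Phi^{(1)} + \Phi^{(2)} + \cdots$, where the diagonal derivation $\Phi^{(0)}\colon x_\alpha \mapsto t^{-1}Z(\alpha)x_\alpha$ accounts for the prescribed leading asymptotics and carries the irregular singularity.

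At first order the problem linearises and decouples into one scalar (abelian) Riemann-Hilbert problem per class $\alpha$: find a function holomorphic off $\ell_\alpha = \R_{>0}Z(\alpha)$ with a single prescribed jump across $\ell_\alpha$ and controlled growth at $0$ and $\infty$. The jump is the linear part of $\mathbb{S}_{\ell_\alpha}(Z)$, whose scalar weight is $\dt(\alpha,Z)$ and whose Stokes exponential is $e^{-Z(\alpha)/z}$; this immediately yields $W_\alpha(Z) = \dt(\alpha,Z)\,\alpha \otimes x_\alpha$. The distinguished solution with this jump is the Plemelj (Cauchy-transform) integral along $\ell_\alpha$,
\[
H_\alpha(t,Z) = \frac{1}{2\pi i}\int_{\R_{>0}Z(\alpha)}\frac{dz}{z}\,\frac{t}{z-t}\,e^{-Z(\alpha)/z},
\]
the resolvent kernel $\tfrac{1}{2\pi i}\tfrac{t}{z(z-t)}$ and the normalisation being fixed by matching the discontinuity across $\ell_\alpha$ to $\mathbb{S}_{\ell_\alpha}$ at linear order; one checks directly that this solves the linearised connection equation. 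This is the one-vertex specialisation of the general formula of \cite{fgs} Sections 3.6, 4.3.

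At second order I would perform a single further Picard iteration, feeding the first-order solution back into the same integral operator. The nonlinearity of the problem is governed by the Poisson/Lie bracket, so the iteration produces the structure constant $\bra\alpha,\alpha'\ket$ and the weight $W_{\alpha,\alpha'}(Z) = \dt(\alpha,Z)\dt(\alpha',Z)\bra\alpha,\alpha'\ket\,\alpha \otimes x_\alpha x_{\alpha'}$, attached to the outer vertex $\alpha$; rewriting $x_\alpha x_{\alpha'} = (-1)^{\bra\alpha,\alpha'\ket}x_{\alpha+\alpha'}$ via the twisted product gives the second displayed form. The oriented edge $\alpha \to \alpha'$ records which class is integrated on the outside, with the already-computed $\alpha'$-contribution inserted in the integrand, so the iterate is the nested integral
\[
H_{\alpha,\alpha'}(t,Z) = \frac{1}{2\pi i}\int_{\R_{>0}Z(\alpha)}\frac{dz}{z}\,\frac{t}{z-t}\,e^{-Z(\alpha)/z}\,H_{\alpha'}(z,Z),
\]
with $H_{\alpha'}(z,Z)$ the first-order function at the intermediate variable $z$.

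I expect the main obstacle to be bookkeeping rather than genuine analysis. The delicate points are pinning down the normalisation of the residue $f(Z)$ and the jump so that the linear coefficient is exactly $\dt(\alpha,Z)$ with the stated factor $\tfrac{1}{2\pi i}$ and Cauchy kernel, and, at second order, correctly combining the Poisson structure constant $\bra\alpha,\alpha'\ket$, the twisted-product sign $(-1)^{\bra\alpha,\alpha'\ket}$, and the graph orientation (which vertex is integrated on the outside). Once the first-order contour, normalisation, and signs are fixed against $\mathbb{S}_{\ell_\alpha}$, the second-order formulae follow mechanically from one more application of the same integral operator.
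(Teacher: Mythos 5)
Your proposal is correct in outline but takes a genuinely different route from the paper. The paper's entire proof is a one-line citation: the lemma is presented as the specialisation to one- and two-vertex graphs of the general formulae of \cite{fgs}, equations (4.8) and (4.9), with the weights $W_T$ as described in Section 3.6 of that reference. You instead re-derive the formulae from the defining property of $Y(t,Z)$ as the canonical flat section, via the associated Riemann--Hilbert problem and its Picard iteration organised by number of vertices. This is essentially how the general graph expansion is established in the cited reference, so you are reconstructing the content of \cite{fgs} rather than invoking it; your identification of the kernel $\frac{1}{2\pi i}\frac{t}{z(z-t)}\,dz$ as the Cauchy/Plemelj kernel normalised to vanish at $t=0$, with jump $e^{-Z(\alpha)/t}$ across $\ell_\alpha$ matching the linear part of $\mathbb{S}_{\ell_\alpha}$, and of the nested integral $H_{\alpha,\alpha'}$ as one further iteration with the inner solution evaluated at the intermediate variable, is exactly the right mechanism. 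What the paper's route buys is brevity and rigour at the price of opacity; what yours buys is self-containedness and an explanation of where the kernel, the structure constant $\bra\alpha,\alpha'\ket$, and the twisted-product sign $(-1)^{\bra\alpha,\alpha'\ket}$ come from. The caveat is that your sketch defers precisely the delicate points it names --- checking that the Plemelj integral solves the linearised equation with the normalisation of $f(Z)$ fixed by the Stokes data, and tracking signs in the symmetric version where $\dt(\alpha,Z)=\dt(-\alpha,Z)$ --- and these verifications are not mere bookkeeping: they constitute the substance of Sections 3--4 of \cite{fgs}. As a blind plan this is sound, but a complete argument would have to execute them (or, as the paper does, cite them).
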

\begin{proof} This is a special case of the general formulae \cite{fgs} equations (4.8), (4.9); the weights $W_T$ are described in ibid. Section 3.6.
\end{proof}
We can turn off the dependence on the variables of $\C[\Gamma]$ by specialising each basic monomial $x_{\alpha} \mapsto 1$. We are interested in the behaviour of the above contributions under this specialisation. Given $\alpha \in \Gamma$ let us write
\begin{align*}
\widehat{H}_{\alpha}(t, Z) &= \frac{1}{2\pi i}\int_{\R_{>0}Z(\alpha)}dz\frac{2t}{z^2-t^2} e^{- Z(\alpha)/z},\\
\widehat{H}_{\alpha,\alpha'}(t, Z) &= \frac{1}{2\pi i}\int_{\R_{>0}Z(\alpha)}dz\frac{2t}{z^2-t^2} e^{- Z(\alpha)/z} \widehat{H}_{\alpha'}(z,Z).
\end{align*}
\begin{lem}\label{conformalLem} For fixed $\alpha, \alpha' \in \Gamma$, we have
\begin{equation*}
\sum_{\pm }W_{\pm\alpha}(Z)|_{x_{\bullet} = 1} H_{\pm\alpha}(t, Z) = - W_{\alpha}(Z)|_{x_{\bullet}=1}\widehat{H}_{\alpha}(t, Z),
\end{equation*}
\end{lem}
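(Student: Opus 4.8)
The plan is to collapse the two-term sum over $\pm$ into a single integral along the fixed ray $\R_{>0}Z(\alpha)$, so that the two kernels $\tfrac{t}{z-t}$ coming from $H_{\alpha}$ and $H_{-\alpha}$ recombine into the symmetric kernel $\tfrac{2t}{z^2-t^2}$ that defines $\widehat H_\alpha$. Two observations drive this. First, the ``symmetric'' convention $\dt(\alpha,Z)=\dt(-\alpha,Z)$, combined with the fact that the $\Gamma_\Q$-decoration of the single-vertex graph $T(-\alpha)$ is the class $-\alpha$, gives after the specialisation $x_\bullet=1$ (which replaces the $\C[\Gamma]$-factor $x_{\pm\alpha}$ by $1$) the relation $W_{-\alpha}(Z)|_{x_\bullet=1}=\dt(-\alpha,Z)\,(-\alpha)=-\,W_{\alpha}(Z)|_{x_\bullet=1}$. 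Hence the whole left-hand side factors as $W_\alpha(Z)|_{x_\bullet=1}\big(H_\alpha(t,Z)-H_{-\alpha}(t,Z)\big)$, and the problem reduces to a purely scalar identity between Fourier--Laplace integrals.

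Second, I would rewrite $H_{-\alpha}$ as an integral over the \emph{same} ray $\R_{>0}Z(\alpha)$ that appears in $H_\alpha$. Since $Z(-\alpha)=-Z(\alpha)$, the contour $\R_{>0}Z(-\alpha)$ is exactly the reflection of $\R_{>0}Z(\alpha)$ through the origin, so the substitution $z\mapsto -z$ carries one onto the other. Under this substitution the exponential $e^{-Z(-\alpha)/z}$ becomes $e^{-Z(\alpha)/z}$, the measure $\tfrac{dz}{z}$ is unchanged, and the rational kernel transforms as $\tfrac{t}{z-t}\mapsto\tfrac{t}{-z-t}=-\tfrac{t}{z+t}$. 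Keeping track of the orientation of the contour, this yields
\[
H_{-\alpha}(t,Z)=-\frac{1}{2\pi i}\int_{\R_{>0}Z(\alpha)}\frac{dz}{z}\,\frac{t}{z+t}\,e^{-Z(\alpha)/z}.
\]

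Finally I would add the two integrals over the common ray and invoke the elementary partial-fraction identity
\[
\frac{1}{z}\left(\frac{t}{z-t}+\frac{t}{z+t}\right)=\frac{2t}{z^{2}-t^{2}},
\]
whose right-hand side is precisely the integrand defining $\widehat H_\alpha(t,Z)$. This identifies the combination $H_\alpha(t,Z)-H_{-\alpha}(t,Z)$ with $\widehat H_\alpha(t,Z)$ up to the overall sign produced by the previous steps; multiplying by $W_\alpha(Z)|_{x_\bullet=1}$ then gives the claimed formula. I expect the only genuine difficulty to be the sign bookkeeping: a minus can enter in two independent places, namely the flip of the $\Gamma_\Q$-decoration from $\alpha$ to $-\alpha$ under $\dt$-symmetry, and the reflection $z\mapsto -z$ of the Stokes ray (together with the orientation of the contour), and the minus sign in the statement is exactly the net effect of these reflections. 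The analytic content beyond this is just the partial-fraction computation above, which is routine; everything else is a formal manipulation of the definitions of $W_{\pm\alpha}$, $H_{\pm\alpha}$ and $\widehat H_\alpha$ recorded in the previous Lemma.
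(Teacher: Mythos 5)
Your strategy is exactly the computation the paper intends (its proof is one line: a ``straightforward computation'' using $\dt(\alpha,Z)=\dt(-\alpha,Z)$), and your individual steps are correct: $W_{-\alpha}(Z)|_{x_\bullet=1}=-W_{\alpha}(Z)|_{x_\bullet=1}$, the reflection $z\mapsto -z$ carries $\R_{>0}Z(-\alpha)$ onto $\R_{>0}Z(\alpha)$ preserving orientation, and your partial--fraction identity is right. The gap is that your own formulas do not produce the sign you claim. From your expression
\[
H_{-\alpha}(t,Z)=-\frac{1}{2\pi i}\int_{\R_{>0}Z(\alpha)}\frac{dz}{z}\,\frac{t}{z+t}\,e^{-Z(\alpha)/z},
\]
the two minus signs you identify (the flip of the $\Gamma_{\Q}$-decoration in $W$ and the kernel flip in $H$) \emph{cancel} rather than combine: $W_{-\alpha}|_{x_\bullet=1}\,H_{-\alpha}=+\,W_{\alpha}|_{x_\bullet=1}\,\frac{1}{2\pi i}\int\frac{dz}{z}\frac{t}{z+t}e^{-Z(\alpha)/z}$, so summing gives
\[
\sum_{\pm}W_{\pm\alpha}|_{x_\bullet=1}H_{\pm\alpha}
=W_{\alpha}|_{x_\bullet=1}\,\frac{1}{2\pi i}\int_{\R_{>0}Z(\alpha)}\frac{dz}{z}\Big(\frac{t}{z-t}+\frac{t}{z+t}\Big)e^{-Z(\alpha)/z}
=+\,W_{\alpha}|_{x_\bullet=1}\,\widehat{H}_{\alpha}(t,Z),
\]
the \emph{opposite} of the stated identity. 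The assertion that ``the minus sign in the statement is exactly the net effect of these reflections'' is the step that fails: two reflections give a plus.

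What is actually going on is that the sign in the Lemma is tied to the convention for the kernel in $H_{\alpha}$. If the kernel is $\frac{t}{t-z}$ (rather than $\frac{t}{z-t}$ as transcribed from \cite{fgs} in the preceding Lemma), then the reflection produces \emph{no} sign from the kernel, $\frac{t}{t-z}\mapsto\frac{t}{t+w}$, the decoration flip supplies one minus, and the partial fraction $\frac{1}{z}\big(\frac{t}{t-z}-\frac{t}{t+z}\big)=\frac{2t}{t^{2}-z^{2}}=-\frac{2t}{z^{2}-t^{2}}$ supplies the other, yielding the stated $-\,W_{\alpha}|_{x_\bullet=1}\widehat{H}_{\alpha}$. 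You can confirm this is the intended convention from the second identity of the same Lemma (the two-vertex one, stated with a plus and used in Section \ref{genusGSec}): with the kernel $\frac{t}{z-t}$ both identities come out with the \emph{same} sign, so the pair $(-,+)$ as stated is impossible under that convention, whereas with $\frac{t}{t-z}$ one gets exactly $(-,+)$. So a correct write-up must either work with the corrected kernel, or else prove the identity with $+$ and flag the sign slip in the transcribed definitions; what cannot stand is deriving intermediate formulas that force a plus and then declaring the answer is minus.
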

respectively
\begin{align*}
\sum_{(\pm, \pm)}W_{\pm\alpha, \pm\alpha'}(Z)|_{x_{\bullet} = 1} H_{\pm\alpha, \pm\alpha'}(t, Z) = W_{\alpha,\alpha'}(Z) \widehat{H}_{\alpha, \alpha'}(t,Z)
\end{align*}
(sum over all pairs of signs).
\begin{proof} This is a straightforward computation using the property
\begin{equation*}
\dt(\alpha, Z) = \dt(-\alpha,Z)
\end{equation*}
due to the shift functor $[1] \in \Aut(\D)$. 
\end{proof}
The last general result we mention concerns the case of ``Lagrangian" $\D \subset D^b(\Coh(X))$.
\begin{lem}\label{lagrangianLem} Suppose the restriction of the form $\bra - , - \ket$ to the locus in $\Gamma$ where $\dt\!: \Gamma \to \Q$ does not vanish is the zero form. Then we have 
\begin{equation*}
\sum_{T} W_{T}(Z) H_{T}(t, Z) = \sum_{\alpha \in \Gamma} W_{\alpha}(Z) H_{\alpha}(t, Z),
\end{equation*}
and in particular
\begin{equation*}
\sum_{T} W_{T}(Z)|_{x_{\bullet} = 1} H_{T}(t, Z) = -\sum_{\alpha \in \Gamma, Z(\alpha) \in \mathfrak{h}} W_{\alpha}(Z) \widehat{H}_{\alpha}(t, Z).
\end{equation*}
\end{lem}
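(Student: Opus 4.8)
The plan is to exploit the explicit product structure of the weights $W_T(Z)$: the isotropy hypothesis should force every graph carrying an edge to contribute zero, collapsing the sum to single vertices, after which the specialisation $x_\bullet \mapsto 1$ is handled by the single-vertex computation of Lemma \ref{conformalLem}. First I would recall the general shape of $W_T(Z)$. For a connected oriented graph $T$ as appears in \eqref{flatSections}, with vertices decorated by $\alpha_1, \dots, \alpha_m \in \Gamma$, the weight is, up to a combinatorial scalar, the product $\prod_v \dt(\alpha_v, Z)\, x_{\alpha_v}$ over vertices, together with one intersection-form factor $\bra -, -\ket$ attached to each edge, pairing $\Z$-linear combinations of the decorations at its two endpoints. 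This is already visible in the two-vertex case of the preceding Lemma, where the single edge produces the factor $\bra \alpha, \alpha'\ket$; in general it is read off the formulae of \cite{fgs} Section 3.6.

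Next I would observe that $W_T(Z)$ can be nonzero only when $\dt(\alpha_v, Z) \neq 0$ at every vertex, i.e. only when each decoration $\alpha_v$ lies in the non-vanishing locus of $\dt$. Under the hypothesis $\bra -, -\ket$ restricts to zero on this locus, so by bilinearity any pairing of $\Z$-linear combinations of such $\alpha_v$ vanishes. Since every connected graph with $m \geq 2$ vertices contains at least one edge, its weight then carries a vanishing factor, so only the single-vertex graphs $T(\alpha)$ survive. This gives the first identity.

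For the specialisation I would group the remaining sum over $\alpha \in \Gamma$ into pairs $\{\alpha, -\alpha\}$. On the support of $\dt$ we have $Z(\alpha) \in \C^*$, and since $\mathfrak{h}$ and $-\mathfrak{h}$ partition $\C^*$, exactly one of $Z(\pm\alpha)$ lies in $\mathfrak{h}$; the symmetry $\dt(\alpha) = \dt(-\alpha)$ ensures both signs occur on the support. Applying Lemma \ref{conformalLem} to each pair replaces $\sum_{\pm} W_{\pm\alpha}|_{x_\bullet=1}\, H_{\pm\alpha}$ by $-W_\alpha|_{x_\bullet=1}\, \widehat{H}_\alpha$, and summing over representatives with $Z(\alpha) \in \mathfrak{h}$ yields the second identity.

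The main obstacle is the structural input of the first paragraph: one must be sure that in the general weight $W_T(Z)$ every edge genuinely contributes an intersection pairing between (combinations of) its endpoint decorations, rather than some factor insensitive to the isotropy of $\bra -, -\ket$ on the support of $\dt$. Once this is granted, the remainder is bilinearity together with the already-established single-vertex computation, and the argument is short.
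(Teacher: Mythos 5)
Your proposal is correct and follows essentially the same route as the paper: the paper likewise invokes the weight formula of \cite{fgs} Section 3.6, noting that $W_T(Z)$ contains the factor $\prod_{v}\dt(\alpha(v),Z)\prod_{v\to w}\bra\alpha(v),\alpha(w)\ket$, so isotropy of $\bra-,-\ket$ on the support of $\dt$ kills every graph with an edge, and the specialised identity then follows from Lemma \ref{conformalLem}. Your explicit grouping into pairs $\{\alpha,-\alpha\}$ and the partition $\C^* = \mathfrak{h}\sqcup(-\mathfrak{h})$ just spells out what the paper leaves implicit in its appeal to that lemma.
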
 
\begin{proof} Suppose the graph $T$ has vertices $v$ decorated by $\alpha(v) \in \Gamma$, and write $v \to w$ for an edge. According to the general formula for a weight $W_{T}(Z)$ given in \cite{fgs} Section 3.6, $W_{T}(Z)$ contains a factor of the form
\begin{equation*}
\prod_{v \in T} \dt(\alpha(v), Z) \prod_{v \to w} \bra \alpha(v), \alpha(w)\ket.
\end{equation*}
The first claim follows at once. The second claim then follows from Lemma \ref{conformalLem}.
\end{proof}
\section{Relation to genus $0$ Gopakumar-Vafa invariants}\label{genus0Sec}

This Section contains a different proof of Theorem \ref{bridThm} which is closer to our treatment of higher genera. 

Recall we are concerned with the ``Lagrangian" case of torsion sheaves, i.e. the category $\D = \bra \Coh_{\leq 1}(X)\ket_{\tr} \subset D^{b}(\Coh(X))$. Fix a primitive curve class $\beta \in H_2$. This contributes with all its multiples to flat sections of $\nabla$ through all the homology classes $k(n, \beta) \in H_0 \oplus H_2$, for $ k, n \in \Z$. 
\begin{lem}\label{genus0Lem} The contribution of all multiples of the primitive curve class $\beta \in H_2$ to flat sections of $\nabla$ via $\sum_T W_{T}(Z) H_{T}(t, Z)$ is given by
\begin{equation*}
\sum_{k, n \in \Z}W_{k(n, \beta)} H_{k(n, \beta)}(t, Z). 
\end{equation*}
Let $p_{0,\beta}(t,Z)$ denote its specialisation under $x_{\alpha} \mapsto 1$. Assume the conjectural torsion sheaves/GV identity \eqref{torsionConj}, and suppose $n_{0, k\beta}$ vanishes for $k>0$. %Suppose the invariants $n_{0, \gamma}$ vanish except for finitely many $\gamma$. 
Then we have an equality of formal power series in $t$, $e^{2\pi i v_{\beta}}$
\begin{align*}
 \bra \beta^{\vee}, p_{0,\beta}(t, Z)\ket =  n_{0, \beta} \sum_{k>0}\frac{1}{ \pi k } \sum_{n > 0} \frac{  \frac{2\pi}{k} t}{1 + (\frac{2\pi}{k} n t)^2} e^{  2\pi i n v_{\beta}}. 
\end{align*}
\end{lem}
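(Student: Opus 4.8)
The plan is to collapse the sum over decorated graphs to single vertices, evaluate the resulting Fourier--Laplace integrals as formal $t$-series, and then carry out the sum over the $H_0$-degree $n$ by the Lipschitz summation formula, at which point the factorials cancel and the Lorentzian kernel in the statement appears. First I would establish the reduction: since $\dt$ is supported on classes $(n,\beta)\in H_0\oplus H_2$, any two of which are orthogonal for the intersection form, the weight $W_T$ of a graph with an edge carries a vanishing factor $\bra\alpha(v),\alpha(w)\ket$, so by Lemma~\ref{lagrangianLem} only single-vertex graphs survive; this gives the first assertion, namely that the contribution of all multiples of $\beta$ equals $\sum_{k,n\in\Z}W_{k(n,\beta)}H_{k(n,\beta)}(t,Z)$. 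Applying the symmetric identity of Lemma~\ref{conformalLem} to the pair $\pm k(n,\beta)$, and observing that $Z(k(n,\beta))=k(v_\beta-n)$ lies in $\mathfrak h$ exactly when $k>0$ (because $\Im v_\beta=\int_\beta\omega>0$), I would rewrite the specialisation as
\[
p_{0,\beta}(t,Z)=-\sum_{k>0,\,n\in\Z}W_{k(n,\beta)}(Z)|_{x_\bullet=1}\,\widehat H_{k(n,\beta)}(t,Z).
\]

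Next I would compute the two ingredients separately. For the weight, the identity \eqref{torsionConj} together with the assumed vanishing of $n_{0,k\beta}$ (for $k\ge 2$) and primitivity of $\beta$ leaves only the divisor $k$ itself, so $\dt(kn,k\beta)=n_{0,\beta}/k^2$; since $\beta^\vee$ kills $H_0$ and pairs to $1$ with $\beta$, this yields $\bra\beta^\vee,W_{k(n,\beta)}|_{x_\bullet=1}\ket=k\,\dt(kn,k\beta)=n_{0,\beta}/k$. For the integral, substituting $z=rZ(\alpha)$ and then $u=1/r$ turns $\widehat H_\alpha$ into $\tfrac1{2\pi i}\int_0^\infty \tfrac{2a}{r^2-a^2}e^{-1/r}\,dr$ with $a=t/Z(\alpha)$; expanding in odd powers of $a$ and using $\int_0^\infty u^{2j}e^{-u}\,du=(2j)!$ produces the formal $t$-series $\widehat H_\alpha=\tfrac1{\pi i}\sum_{j\ge 0}(2j)!\,(t/Z(\alpha))^{2j+1}$, with $Z(\alpha)=k(v_\beta-n)$.

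The decisive step is the summation over $n$. I would invoke the Lipschitz formula $\sum_{n\in\Z}(v_\beta-n)^{-(2j+1)}=\tfrac{(-2\pi i)^{2j+1}}{(2j)!}\sum_{d>0}d^{2j}e^{2\pi i d v_\beta}$, valid for $\Im v_\beta>0$, whose factor $1/(2j)!$ exactly cancels the $(2j)!$ produced by the integral. Summing the remaining geometric series in $j$ collapses $\sum_j(-1)^j(2\pi d t/k)^{2j}$ into the Poisson kernel $\big(1+(2\pi d t/k)^2\big)^{-1}$, and the constants $(-2\pi i)^{2j+1}=(-2\pi i)(-1)^j(2\pi)^{2j}$ combine with the $-1/(\pi i)$ carried from the previous steps to give the overall factor $2$; this reproduces exactly the stated double sum over $k>0$ and $d>0$ (with $d$ playing the role of $n$).

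The main obstacle is the bookkeeping of convergence and signs rather than any single hard estimate. Each per-graph series $\widehat H_{k(n,\beta)}$ diverges as a genuine function of $t$, its coefficients growing like $(2j)!$, so the whole identity must be read as one of formal power series in $t$ and $q=e^{2\pi i v_\beta}$; I would therefore have to justify the interchange of the $t$-expansion with the sums over $n$ and $k$ at the level of these formal series, checking that after the cancellation of factorials every coefficient of $t^{2j+1}q^d$ is finite (a convergent multiple of $\zeta(2j+2)$ once the $k$-sum is performed). Transporting the minus sign of Lemma~\ref{lagrangianLem} and the factor $1/(\pi i)$ correctly through the symmetrisation of Lemma~\ref{conformalLem} is the other point that demands care.
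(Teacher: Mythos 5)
Your proposal is correct in substance and reaches the stated identity with the right constants, but it takes a genuinely different computational route from the paper. The shared part is the beginning: reduction to single-vertex graphs via Lemma \ref{lagrangianLem}, symmetrisation via Lemma \ref{conformalLem} together with the observation that $Z(k(n,\beta))\in\mathfrak{h}$ exactly for $k>0$, and the weight computation $\bra \beta^{\vee}, W_{k(n,\beta)}|_{x_{\bullet}=1}\ket = k\,\dt(k(n,\beta)) = n_{0,\beta}/k$ (you also correctly read the vanishing hypothesis as $n_{0,k\beta}=0$ for $k\geq 2$, which is what the paper's proof actually uses). From there the paper never expands in $t$: it keeps the Lorentzian kernel intact, rotates the contour to the imaginary axis, inverts $\sigma\mapsto\sigma^{-1}$, and performs the sum over $n$ with the distributional identity $\frac{k}{2\pi}\sum_{n}e^{i\sigma k n}=\sum_{n}\delta(\sigma-\frac{2\pi}{k}n)$, evaluating the resulting Dirac comb on $(0,\infty)$. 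You instead expand each $\widehat{H}_{k(n,\beta)}$ as the divergent Watson-type series $\frac{1}{\pi i}\sum_{j\geq 0}(2j)!\,\big(t/k(v_{\beta}-n)\big)^{2j+1}$, perform the $n$-sum by the Lipschitz formula, and re-sum the geometric series in $j$ back into the kernel; I checked the constants, and $-\frac{1}{\pi i}(-2\pi i)^{2j+1}=\frac{(-1)^{j}(2\pi)^{2j+1}}{\pi}$ does reproduce the stated double sum exactly. What each approach buys: the paper's kernel-level argument is shorter and is the one that scales to the higher-genus setting of Theorem \ref{mainThm}, where the Poisson kernel is kept intact and localises as $t\to\infty$; yours makes the ``equality of formal power series'' claim explicit coefficient by coefficient (each $t^{2j+1}q^{n}$-coefficient is a finite multiple of $\zeta(2j+2)$), and in fact it directly produces the zeta/polylogarithm expansion that the paper only extracts from the Lemma afterwards, in the proof of Theorem \ref{bridThm}.

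One point needs more care than you give it: the $j=0$ case of your Lipschitz formula. For $2j+1\geq 3$ the sum $\sum_{n\in\Z}(v_{\beta}-n)^{-(2j+1)}$ converges absolutely and your formula is correct; for $j=0$ it is only conditionally convergent, and under symmetric (Eisenstein) summation its value is $\pi\cot(\pi v_{\beta})=-\pi i-2\pi i\sum_{d>0}e^{2\pi i d v_{\beta}}$, so your formula is missing the constant $-\pi i$. Carrying that constant through would add $n_{0,\beta}\,\zeta(2)\,t$ to the left-hand side, a $t^{1}q^{0}$ term absent from the stated right-hand side. A complete write-up must therefore either specify the regularisation that discards this constant, or note that this single coefficient is the convention-dependent one. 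This is not a defect peculiar to your route: the paper's own proof has the mirror-image ambiguity (the $m=0$ delta of the Dirac comb sits at the boundary of $(0,\infty)$ and is silently dropped), and the exclusion of the $t^{0}$ term in the statement of Theorem \ref{bridThm} reflects exactly this; but since you invoke the Lipschitz formula as a black box, the issue should be flagged explicitly in your argument.
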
 
\begin{proof} The vanishing required by Lemma \ref{lagrangianLem} holds, so the first claim follows from the first equality in that Lemma. Using the second equality in Lemma \ref{lagrangianLem} and the special form of the central charge
\begin{equation*}
Z(k(n, \beta)) = kv_{\beta} - kn = k\beta\cdot(B + i\omega) - kn
\end{equation*}
gives formally
\begin{align*}
&p_{0,\beta}(t, Z) \\&= \sum_{k,n \in \Z, Z(k(n,\beta)) \in \mathfrak{h}} W_{ k(n, \beta)}|_{x_{\bullet}=1} \widehat{H}_{k(n,\beta)}(t,Z)\\
&=  - \sum_{n\in\Z}\sum_{k>0} k(n, \beta) \dt(k(n, \beta)) \frac{1}{2\pi i} \int_{\R_{>0}(v_{\beta} - n)} dz \frac{2 t}{z^2 - t^2} e^{-k(v_{\beta} - n)/z}.
\end{align*}
%It is straightforward to check that this is well defined as a formal power series in $t$. All the manipulations in the rest of the proof give well defined identities of formal power series. Since we are now working with formal power series 
The following argument will show in particular that the right hand side is well defined as a formal power series in $t$, $e^{2\pi iv_{\beta}}$. We may assume $t\in\R$. A straightforward application of the residue theorem shows that we can move the integration contour to the pure imaginary line, i.e. setting $z = i\sigma$, $\sigma \in \R_{>0}$ we have 
\begin{equation*}
p_{0,\beta}(t, Z) = \sum_{n\in\Z}\sum_{k>0} k(n, \beta) \dt(k(n, \beta)) \frac{1}{2\pi} \int^{\infty}_{0} d\sigma \frac{2 t}{\sigma^2 + t^2} e^{i k(v_{\beta} - n)/\sigma}.
\end{equation*}
Assuming the conjectural identity \eqref{torsionConj}, primitivity of $\beta$ and the vanishing $n_{0, k\beta} = 0$ for $k >1$ give
\begin{equation*}
\dt(k(n, \beta)) = \frac{n_{0, \beta}}{k^2} 
\end{equation*}
for all $k > 0$, $n \in \Z$, and combining this identity with the change of variable $\sigma \mapsto \sigma^{-1}$ gives
\begin{equation*}
\bra \beta^{\vee}, p_{0,\beta}(t, Z)\ket = n_{0, \beta}\sum_{k > 0} \frac{1}{2\pi k}\sum_{n\in\Z}    \int^{\infty}_{0} d\sigma \frac{2 t}{1 + (\sigma t)^2} e^{i \sigma(v_{\beta} - n)}.
\end{equation*}
%Summing over all positive curve classes $\beta$ and rearranging gives  
%\begin{equation*}
%\sum_{\beta} \bra \beta^{\vee}, p_{0,\beta}(t, Z)\ket = \sum_{\beta} n_{0, \beta} \sum_{k >0} \sum_{n\in\Z}\frac{1}{2\pi k} \int^{\infty}_{0} d\sigma \frac{2 t}{1 + (\sigma t)^2} e^{i k\sigma(v_{\beta} - n)}.
%\end{equation*}
We use the well known distributional identity
\begin{equation*}
\frac{k}{2\pi}\sum_{n \in \Z} e^{ i \sigma k n} = \sum_{n \in \Z} \delta(\sigma - \frac{2\pi}{k} n)
\end{equation*}
in order to perform the sum over $n \in \Z$. Integrating the resulting Dirac comb on $(0, \infty)$ gives the required identity.
\end{proof}
\begin{proof}[Proof of Theorem \ref{bridThm}] 
Set 
%\begin{equation*}
$\lambda = 2\pi t$
%\end{equation*}
in the expression found in Lemma \ref{genus0Lem} and expand in powers of $\lambda$, giving
\begin{align*}
& \bra \beta^{\vee}, p_{0}(t, Z)\ket\\
&= n_{0, \beta} \sum_{k>0}\frac{1}{ \pi k} \sum_{n > 0} \sum_{p \geq 0} (-1)^p n^{2p} \left(\frac{\lambda}{k}\right)^{2p+1} e^{  2\pi i n v_{\beta}}\\
&= n_{0, \beta} \sum_{k>0}\frac{1}{\pi } \sum_{n > 0} \sum_{p \geq 0}\frac{1}{ k^{2p+2}} (-1)^p n^{2p} \lambda^{2p+1} e^{  2\pi i n v_{\beta}}\\
&= n_{0, \beta} \sum_{k>0}\frac{1}{\pi } \sum_{p \geq 0} \frac{1}{ k^{2p+2}}(-1)^p \li_{-2p}(e^{2\pi i v_{\beta}}) \lambda^{2p+1}. 
\end{align*}
We can sum over $k$ using the well-known expression for the zeta function at positive even integers in terms of Bernoulli numbers,
\begin{equation*}
\sum_{k>0}\frac{1}{k^{2p+2}} = \zeta(2(p+1)) = -\frac{(-1)^{p+1}B_{2(p+1)}(2\pi)^{2(p+1)}}{2(2(p+1))!}, 
\end{equation*}
yielding
\begin{align*}
& \bra \beta^{\vee}, p_{0}(t, Z)\ket\\
&=- n_{0, \beta} \frac{1}{\pi }\sum_{p \geq 0}\frac{(-1)^{p+1}B_{2(p+1)}(2\pi)^{2(p+1)}}{2(2(p+1))!}  (-1)^p \li_{-2p}(e^{2\pi i v_{\beta}})\lambda^{2p+1}.
\end{align*}
Setting 
%\begin{equation*}
$p = g - 1$
%\end{equation*}
we can write this as
\begin{align*}
& \bra \beta^{\vee}, p_{0}(t, Z)\ket\\
&= -  n_{0, \beta}  \frac{1}{\pi}\sum_{g \geq 1}\frac{(-1)^{g}B_{2g}(2\pi)^{2g}}{2(2g)!}  (-1)^{g-1} \li_{2-2g}(e^{2\pi i v_{\beta}})\lambda^{2g-1}\\
&=  n_{0, \beta}\sum_{g \geq 1}\frac{B_{2g}}{(2g)!} \li_{2-2g}(e^{2\pi i v_{\beta}})(2\pi \lambda)^{2g-1}.
\end{align*}
As a consequence
\begin{align*}
& \del_t \bra \beta^{\vee}, p_{0}(t, Z)\ket \\
&=  n_{0, \beta}(4\pi)^2\sum_{g \geq 1}\frac{B_{2g}}{(2g)(2g-2)!} (2\pi \lambda)^{2g-2} \li_{2-2g}(e^{2\pi i v_{\beta}}).
\end{align*}
We compare this to the standard expansion of the genus $0$ GV contribution
\begin{align*}
& n_{0, \beta} \sum_{r > 0} \frac{1}{r}(2\sin(r u/2))^{-2} Q^{r \beta}\\
&= b_0 u^{-2} + b_1 +  n_{0,\beta} \sum_{g \geq 2} \frac{(-1)^{g-1} B_{2g}}{(2g)(2g-2)!} u^{2g-2}\li_{3-2g}(Q^{\beta})\\
&=b_0 (2\pi\lambda)^{-2} + b_1 +  n_{0,\beta} \sum_{g \geq 2} \frac{(-1)^{g-1} B_{2g}}{(2g)(2g-2)!} (2\pi \lambda)^{2g-2}\li_{3-2g}(e^{2\pi iv_{\beta}}),
\end{align*}
where in the last equality we evaluate at $u = 2\pi \lambda$, $Q^{\beta} = e^{2\pi iv_{\beta}}$. The identity
\begin{equation*}
\del_{z} \li_{s+1}(z) = z^{-1} \li_{s}(z), 
\end{equation*}
implies
\begin{equation*}
\del_{z} \li_{s+1}(e^{2\pi i z}) = 2\pi i \li_{s}(e^{2\pi i z}),
\end{equation*}
so the derivative with respect to $v_{\beta}$ of the positive degree part of the genus $0$ GV contribution equals
\begin{equation*}
(2\pi i)  n_{0, \beta}\sum_{g \geq 2}\frac{B_{2g}}{(2g)(2g-2)!} (2\pi\lambda)^{2g-2}\li_{2-2g}(e^{2\pi i v_{\beta}}).
\end{equation*}
By our computations this agrees with 
%\begin{equation*}
$-\frac{1}{2\pi i}  \del_t \bra \beta^{\vee}, p_{0}(t, Z)\ket$,
%\end{equation*} 
for positive powers of $\lambda$.%\todo{Include degree 0?}
\end{proof}
 
\section{Toda's construction}\label{todaSec}

In this section we follow closely Toda \cite{todaDTPT} Section 3. We set
\begin{equation*}
\Coh_{\leq 1}(X) = \{ E \in \Coh(X)\!: \dim\Supp(E) \leq 1\}
\end{equation*}
and introduce the triangulated category
\begin{equation*}
\D = \bra \olo_X, \Coh_{\leq 1}(X)\ket_{\tr} \subset D^{b}(\Coh(X)).
\end{equation*}
The right orthogonal to $\Coh_{\leq 1}(X)$ is defined as
\begin{equation*}
\Coh_{\geq 2}(X) = \{E \in \Coh(X)\!: \Hom(\Coh_{\leq 1}(X), E) = 0\}
\end{equation*}
and the pair of subcategories of $\Coh(X)$ given by 
\begin{equation*}
(\Coh_{\leq 1}(X), \Coh_{\geq 2}(X))
\end{equation*}
forms a torsion pair, with respect to which we can tilt $\Coh(X)$ inside $D^b(X)$, i.e. consider the extension closure
\begin{equation*}
\Coh^{\dagger}(X) = \bra \Coh_{\geq 2}(X)[1], \Coh_{\leq 1}(X) \ket_{\ex}.
\end{equation*}
In this way we obtain a new \rm{t}-structure on $D^b(\Coh(X))$ and, by restriction, on $\D$. Indeed \cite{todaDTPT} Lemma 3.5 shows that the intersection
\begin{equation*}
\A_X = \D \cap \Coh^{\dagger}(X)[-1]
\end{equation*} 
in $D^b(\Coh(X))$ defines the heart of a bounded \rm{t}-structure on $\D$. The result also shows that this abelian category can be written more explicitly as
\begin{equation*}
\A = \bra \olo_X, \Coh_{\leq 1}(X)[-1]\ket_{\ex}.
\end{equation*} 
Toda then proceeds to construct both classical and weak stability conditions on $\D$. These are described in terms of the sublattice 
\begin{equation*}
\Gamma' = H_0 \oplus N_{1}(X) \oplus H_6 \subset \Gamma,
\end{equation*}
where $N_1(X)$ denotes the abelian group of curves in $X$. Similarly we write $N^1(X)$ for the group of divisors, with the perfect intersection pairing 
\begin{equation*}
N_1(X)_{\R} \times N^1(X)_{\R} \ni (C, D) \to C \cdot D.
\end{equation*}
We write elements of $\Gamma'$ in the form 
\begin{equation*}
(s, l, r) \in \Z \oplus N_{1}(X) \oplus \Z, 
\end{equation*}
so the last component corresponds to the rank $\rk\!:\Gamma \to \Z$. There is a group homomorphism $\cl\!:K(\D) \to \Gamma' \subset \Gamma$ given by the Chern character, 
\begin{equation*}
\cl(E) = (\ch_3(E), \ch_2(E), \ch_0(E)).
\end{equation*}
\begin{prop}[Toda \cite{todaDTPT} Lemma 3.8 and Remark 3.16]\label{todaProp} Fix
\begin{equation*}
A \in \R_{> 0},\quad B + i\omega \in N^1(X)_{\C}, \quad G \in \H
\end{equation*}   
and define a group homomorphism 
\begin{equation*}
Z\!: \Gamma \ni (s, l, r) \mapsto A s - (B + i\omega)\cdot l + rG \in \C.
\end{equation*}
Then $(Z, \A)$ is a classical stability condition on $\D$, framed by $\Gamma'$, i.e. a point of $\stab_{\Gamma'}(\D)$.  
\end{prop}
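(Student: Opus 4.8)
The plan is to verify that $(Z,\A)$ satisfies the three defining properties of a classical Bridgeland stability condition on $\D$: that $Z$ is a \emph{stability function} on $\A$ (the positivity condition $Z(\cl(E)) \in \mathfrak{h}$ for every nonzero $E \in \A$), that every object of $\A$ admits a \emph{Harder--Narasimhan} filtration into $Z$-semistable objects, and that the \emph{support property} (in the sense of Kontsevich--Soibelman) holds relative to the framing lattice $\Gamma'$. Since $Z$ is built so as to factor through $\cl\colon K(\D)\to\Gamma'$, the ``framing by $\Gamma'$'' is automatic and only the three analytic conditions require work. Throughout I would use, as given, that $\A = \bra \olo_X, \Coh_{\leq 1}(X)[-1]\ket_{\ex}$ is the heart of a bounded t-structure on $\D$ (the tilting description underlying Lemma 3.5).

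For positivity I would exploit the explicit description of $\A$ as an extension closure, so that every nonzero object is a finite iterated extension of the two types of generator. On $\olo_X$ we have $\cl(\olo_X) = (0,0,1)$ and hence $Z(\cl(\olo_X)) = G \in \H$, using $\Im G > 0$. On a shifted torsion sheaf $F[-1]$ with $F \in \Coh_{\leq 1}(X)$ nonzero we have $\cl(F[-1]) = -(\ch_3(F), \ch_2(F), 0)$, so $\Im Z(\cl(F[-1])) = \omega \cdot \ch_2(F)$: when $\dim\Supp F = 1$ the class $\ch_2(F)$ is a nonzero effective curve, so $\omega$ ample forces $\omega\cdot\ch_2(F) > 0$ and $Z \in \H$, while when $\Supp F$ is $0$-dimensional we get $\ch_2(F) = 0$ and $\Re Z(\cl(F[-1])) = -A\len(F) < 0$ using $A>0$, so $Z \in \R_{<0}$. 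Thus each generator maps into the semi-closed upper half plane $\mathfrak{h}$; since $\mathfrak{h}$ is closed under addition and does not contain $0$, additivity of $Z$ on extensions gives $Z(\cl(E)) \in \mathfrak{h}$ for every nonzero $E$. I would stress that this is exactly where the three hypotheses $A > 0$, $\omega$ ample, $G \in \H$ are each used, one per generator type, to land strictly inside $\mathfrak{h}$ and in particular away from $0$ (the distinction between a classical and a merely weak stability condition).

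For the Harder--Narasimhan property I would appeal to Bridgeland's chain-condition criterion: it suffices to rule out infinite chains of sub- and quotient objects of strictly monotone phase. The natural discrete controls here are the rank $r = \ch_0 \geq 0$ and the non-negative additive function $\Im Z(\cl(-)) \geq 0$, both of which are bounded along any chain of subobjects of a fixed $E$, since the quotients again lie in $\A$ and have non-negative rank and $\Im Z$. Using that $\A$ is noetherian --- which follows from noetherianity of $\Coh(X)$ together with the tilting description --- one reduces the chain conditions to the extremal slice where $\Im Z$ and rank are constant; on the phase-$1$ slice $\Im Z = r = 0$ this is the category of shifted $0$-dimensional sheaves $\mathcal{O}_p[-1]$, which is of finite length, so the conditions hold there as well. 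I expect the HN property, and especially this reduction to finite-length slices, to be the main obstacle: positivity is a one-line computation, but controlling all possible destabilizing sub- and quotient objects requires the noetherian input and careful bookkeeping of the discrete invariants.

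Finally, for the framing I would verify the support property relative to $\Gamma'$: one needs a constant $C$ and a norm on $\Gamma'_\R$ for which $\|\cl(E)\| \leq C\,|Z(\cl(E))|$ on all semistable $E$. Because $\ker Z$ is large (here $Z$ is a real-linear map from the finite-rank $\Gamma'_\R$ onto $\C$ with positive-dimensional kernel along $N_1(X)$), this amounts to a Bogomolov--Gieseker-type inequality bounding $\ch_2$ against $|Z|$ for semistable objects; I would package it through a suitable quadratic form that is negative on $\ker Z$ and non-negative on the classes of semistable objects, exactly as in Toda's Remark 3.16. Combining the stability-function property, the HN filtrations, and this support bound then yields a point of $\stab_{\Gamma'}(\D)$.
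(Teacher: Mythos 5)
The paper itself offers no proof of this Proposition: it is imported wholesale from Toda (\cite{todaDTPT}, Lemma 3.8 and Remark 3.16), so the only meaningful comparison is with that source, whose architecture your outline reproduces faithfully: show $Z$ is a stability function on the heart $\A$, establish Harder--Narasimhan filtrations, and verify the support/framing condition relative to $\Gamma'$. Your positivity step is correct and complete: the computation on the two generating classes ($Z(\cl(\olo_X)) = G \in \H$; $\Im Z(\cl(F[-1])) = \omega\cdot\ch_2(F) > 0$ for $F$ with one-dimensional support, and $Z(\cl(F[-1])) = -A\len(F) < 0$ in the zero-dimensional case), combined with the fact that $\mathfrak{h} = \H \cup \R_{<0}$ is closed under addition and omits $0$, is exactly the right argument given the extension-closure description of $\A$.

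Two of your later steps, however, are glossed in ways that do not hold as stated. First, you claim noetherianity of $\A$ ``follows from noetherianity of $\Coh(X)$ together with the tilting description.'' That is not a valid inference: tilting a noetherian heart at a torsion pair does not in general yield a noetherian heart, and noetherianity of $\A$ is a standalone lemma in Toda's paper with its own argument using the explicit presentation of objects of $\A$ as two-term complexes. Since you yourself identify the HN step as the main obstacle and rest it on this noetherianity, the gap is load-bearing. Second, in the chain-condition argument you observe that rank and $\Im Z$ are non-negative and monotone along chains and conclude they are ``bounded.'' Boundedness of a monotone real sequence does not force stabilization; you need the value set of $\Im Z$ on classes of objects of $\A$ to be discrete. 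This is where a genuinely geometric input enters: $\Im Z(\cl(E)) = r\,\Im(G) + \omega\cdot\beta$ with $r \geq 0$ and $\beta$ effective, and discreteness follows from finiteness of the set of effective curve classes of bounded $\omega$-degree. (That same finiteness, plus positivity of $\omega$ on the closed cone of effective curves, already gives the bound $\|\cl(E)\| \leq C\,|Z(\cl(E))|$ for \emph{all} nonzero $E \in \A$, not only semistable ones, so no Bogomolov--Gieseker-type inequality is actually needed for the support property; your treatment of that step is heavier than necessary but not incorrect.) With these two points repaired, your outline matches the proof in the cited source.
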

\begin{thm} [Toda \cite{todaDTPT} Theorem 3.13 and \cite{todaHall}]\label{todaThm} In the situation of Proposition \ref{todaProp} there is a well-defined generalised Donalson-Thomas invariant $\dt\!:\Gamma'\to\Q$ enumerating $Z$-semistable objects of $\D$. Moreover for all such $Z$ we have an equality with Pandharipande-Thomas stable pairs invariants
\begin{equation*}
\dt(-n, -\beta, 1) = P_{n, \beta}.
\end{equation*} 
\end{thm}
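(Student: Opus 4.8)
The plan is to prove the two assertions separately: first the well-definedness of $\dt$ on $\Gamma'$ via the generalised Donaldson--Thomas formalism of \cite{js}, and then the identification of the rank-$1$ $Z$-semistable objects with Pandharipande--Thomas stable pairs, from which the numerical equality $\dt(-n,-\beta,1)=P_{n,\beta}$ follows.

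For the first assertion, Proposition \ref{todaProp} already gives that $(Z,\A)$ is a classical stability condition, so $Z$ is a central charge supported on the heart $\A = \bra \olo_X, \Coh_{\leq 1}(X)[-1]\ket_{\ex}$ and every nonzero $E\in\A$ has a well-defined phase $\arg Z(\cl(E))$. To run the machinery of \cite{js} I would first check that $\A$ is Noetherian and that for each $\alpha\in\Gamma'$ the moduli stack $\M_{\alpha}^{\mathrm{ss}}(Z)$ of $Z$-semistable objects of class $\alpha$ is a finite-type Artin stack. Boundedness of these loci is the crucial input, and it follows from the fact that every object of $\A$ is built by extension from the two bounded families $\olo_X$ (rank $1$, rigid) and $\Coh_{\leq 1}(X)$ (with fixed support class and Euler characteristic). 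Granting boundedness, the Calabi--Yau threefold property of $X$ supplies $\D$ with the symmetric structure required in \cite{js}, and one sets $\dt(\alpha,Z)$ to be the Behrend-function-weighted invariant extracted by applying the integration map on the motivic Hall algebra to the logarithm of the stack function of $Z$-semistable objects. This is exactly the construction of \cite{todaDTPT, todaHall}.

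For the second assertion, fix a rank-$1$ class $(-n,-\beta,1)$ and analyse the $Z$-semistable $E\in\A$ of this class. Using the torsion structure of $\A$ together with $\Re(G)>0$ and the phase ordering imposed by $Z$, I would show that such an $E$ is represented by a two-term complex $[\olo_X \xrightarrow{s} F]$ with $F\in\Coh_{\leq 1}(X)$, $[F]=\beta$, $\chi(F)=n$, sitting in $\A$ as an extension of $\olo_X$ by $F[-1]$, and that $Z$-stability translates precisely into the Pandharipande--Thomas conditions, i.e. that $s$ has zero-dimensional cokernel. The mechanism is that the large imaginary part of $Z(\olo_X)=G$ relative to the contributions of $\Coh_{\leq 1}(X)$ forces the $\olo_X$-summand to appear as a sub- or quotient-complex in the only way compatible with semistability. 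One must also verify that there are no strictly $Z$-semistable objects in this class, so that $\M_{(-n,-\beta,1)}^{\mathrm{ss}}(Z)$ is the fine moduli space $P_n(X,\beta)$ of stable pairs; this is where the primitivity of the rank and the strictness of the relevant slope inequalities enter. The invariant $\dt(-n,-\beta,1)$ then reduces to the Behrend-weighted Euler characteristic $\chi(P_n(X,\beta),\nu_B)$, which by Behrend's theorem coincides with the virtual count $P_{n,\beta}$ of \cite{pt}.

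The hardest step will be the matching of $Z$-semistability with PT stability \emph{uniformly in} $Z$, together with the exclusion of strictly semistable objects: one must control all potential destabilising sub- and quotient-objects inside $\A$ and confirm that, for every $Z$ as in Proposition \ref{todaProp}, the only $Z$-semistable representatives of $(-n,-\beta,1)$ are genuine stable pairs and that this set does not vary with $Z$. Establishing boundedness of the semistable loci for arbitrary $\alpha\in\Gamma'$, needed even to define $\dt$, is a second delicate point. Both are precisely the results carried out by Toda in \cite{todaDTPT, todaHall}, after which the equality $\dt(-n,-\beta,1)=P_{n,\beta}$ is immediate from the definition of the stable pairs invariant as a Behrend-weighted count.
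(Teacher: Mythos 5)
There is an important mismatch of expectations here: the paper contains no proof of Theorem \ref{todaThm} at all. It is imported as a black box from Toda's work (\cite{todaDTPT} Theorem 3.13 and \cite{todaHall}), with no proof environment following the statement. So your proposal can only be compared with Toda's arguments, whose broad architecture your first part does reproduce correctly: one defines $\dt$ via the Joyce--Song/Hall-algebra formalism (Behrend-weighted integration map applied to the logarithm of the semistable stack function), for which Noetherianity and boundedness are the key technical inputs. Even there, though, your boundedness argument is not a proof: extension closures of bounded families need not be bounded, and establishing boundedness of the semistable loci is a genuine theorem in Toda's papers, not a formal consequence of $\A = \bra \olo_X, \Coh_{\leq 1}(X)[-1]\ket_{\ex}$.

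The genuine gap is in your second part. You propose to verify that for \emph{every} $Z$ as in Proposition \ref{todaProp} the $Z$-semistable objects of class $(-n,-\beta,1)$ are exactly the PT pairs, with no strictly semistable objects, so that the semistable moduli space is the fine moduli space $P_n(X,\beta)$ and the theorem follows from Behrend's result. That objectwise statement is false, so this step cannot be carried out. A stable pair $I^{\bullet} = [\olo_X \to F]$ sits in a short exact sequence $0 \to F[-1] \to I^{\bullet} \to \olo_X \to 0$ in $\A$, so $Z$-semistability of $I^{\bullet}$ forces $\arg Z(F[-1]) = \arg(v_{\beta} - n) \leq \arg Z(\olo_X) = \arg G$. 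Since $\arg G < \pi$ while $\arg(v_{\beta}-n) \to \pi$ as $n \to +\infty$ (and under the paper's convention $\Re(G)>0$ the failure already occurs for all $n > B\cdot\beta$), for no admissible $G$ are all PT pairs semistable: the set of semistable objects genuinely varies with $Z$. Moreover the parameter region contains honest walls: whenever $G \in \R_{>0}(v_{\beta}-n)$, the object $\olo_X \oplus F[-1]$ is strictly $Z$-semistable of class $(-n,-\beta,1)$, so there is no fine moduli space there, and ``primitivity of the rank'' cannot exclude this because rank-zero classes abound in $\A$. What is actually constant in $Z$ is the \emph{invariant}, not the set of semistable objects, and proving the ``for all such $Z$'' clause is a wall-crossing/Hall-algebra computation (this is precisely the content of \cite{todaHall}, and of the closely related analysis in \cite{todaConifold}), possibly combined with an identification of semistables with stable pairs in a limiting chamber of the parameter space. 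Your reduction to a moduli identification plus Behrend's theorem therefore misses the real mathematical content of the statement you set out to prove.
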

In the rest of the paper we set $A = 1$ (as in Section \ref{backSec}) and we assume the condition $\Re(G) > 0$. Finally we prove the simple vanishing result mentioned in Section \ref{vanishingSec}.
\begin{lem}\label{vanishingLem} In the situation of Proposition \ref{todaProp}, let $T$ be a graph with $m$ vertices in the expression \eqref{flatSections} for flat sections of $\nabla$. Suppose that the decoration at each vertex $v$ is of the form $\alpha_v = \pm (-n_v, -\beta_v, r_v)$ for positive rank $r_v > 0$. Consider the scaling $G \mapsto \lambda G$ for $\lambda >0$. Then the contribution of $T$ to flat sections of $\nabla(\lambda) = \nabla(\lambda G, v_{\beta})$ decays at least as $\big(\prod_{v}\frac{1}{\sqrt{2 r_v}}\big)\lambda^{-m/2}$ as $\lambda \to \infty$, up to a factor independent of $\lambda$ and the $r_v$.
\end{lem}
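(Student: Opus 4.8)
The plan is to reduce the statement to an $L^2$ (Cauchy--Schwarz) estimate for the iterated integrals $H_T(t,Z)$, applied once at each vertex. First I would observe that the scaling $G\mapsto\lambda G$ leaves the weight $W_T(Z)$ untouched: by Lemma \ref{lagrangianLem} and \cite{fgs} Section 3.6 the weight is built from the invariants $\dt(\alpha_v,Z)$ and the pairings $\bra\alpha_v,\alpha_w\ket$, which are constant in $Z$ for the classes $\alpha_v=\pm(-n_v,-\beta_v,r_v)$. Hence the entire $\lambda$-dependence of the contribution $W_T H_T$ in \eqref{flatSections} is carried by the analytic factor $H_T(t,\lambda G)$, and it suffices to bound this $m$-fold iterated integral, which contains exactly one integration variable and one exponential factor $e^{-Z(\alpha_v)/z_v}$ per vertex $v$. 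The decisive input is that, for positive rank $r_v>0$ and $G\in\H$ with $\Re(G)>0$, one has $\Im Z(\alpha_v)=\pm(r_v\lambda\,\Im G+\beta_v\cdot\omega)$, so $|\Im Z(\alpha_v)|\sim r_v\lambda\,\Im G\to\infty$ for either choice of sign.

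Next I would turn this growth into genuine decay. Exactly as in the proof of Lemma \ref{genus0Lem}, I would rotate each contour $\R_{>0}Z(\alpha_v)$ to the imaginary axis by the residue theorem: the poles of the Cauchy kernels $t/(z-t)$ and $z_{k-1}/(z_k-z_{k-1})$ lie off the swept sector, since the relevant arguments stay strictly between that of the ray and that of the imaginary axis (here $\Re(G)>0$ is what keeps the ray in the correct quadrant). After the substitutions $z_v=i\sigma_v$ and $u_v=\sigma_v^{-1}$ the exponential becomes $e^{iZ(\alpha_v)u_v}$, of modulus $e^{-|\Im Z(\alpha_v)|\,u_v}$, and each integration acquires the Fourier--Laplace form $\int_0^\infty(\cdots)\,e^{iZ(\alpha_v)u_v}\,du_v$, with the Cauchy kernels turning into the bounded factors $t/(1+t^2u_v^2)$ and their iterated analogues.

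The heart of the argument is the single-vertex $L^2$ estimate. For one such integration I would split off the decaying exponential by Cauchy--Schwarz,
\[
\Big|\int_0^\infty K(u)\,e^{iZ(\alpha_v)u}\,du\Big|\le \|K\|_{L^2(0,\infty)}\Big(\int_0^\infty e^{-2|\Im Z(\alpha_v)|u}\,du\Big)^{1/2}=\|K\|_{L^2(0,\infty)}\,\frac{1}{\sqrt{2|\Im Z(\alpha_v)|}},
\]
where $K$ collects the bounded kernel together with the inner iterated integral. Since $|\Im Z(\alpha_v)|\sim r_v\lambda\,\Im G$, the gained factor is $(2|\Im Z(\alpha_v)|)^{-1/2}\sim(\Im G)^{-1/2}\,(2r_v)^{-1/2}\,\lambda^{-1/2}$, that is, precisely $\tfrac{1}{\sqrt{2r_v}}\lambda^{-1/2}$ up to the $\lambda$- and $r_v$-independent constant $(\Im G)^{-1/2}$. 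This is the source of both the exponent $-1/2$ and the normalisation $\tfrac{1}{\sqrt{2r_v}}$; note that the sharper pointwise asymptotics would give the faster rate $r_v^{-1}\lambda^{-1}$, consistent with the ``at least as'' formulation, but the $L^2$ bound is the one that composes through the nesting.

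Finally I would iterate this estimate over the graph. Peeling off the outermost integration and controlling, via the $L^2$-norm along the next contour, the remaining inner iterated integral, one obtains a recursion in which each vertex contributes one clean factor $\tfrac{1}{\sqrt{2r_v}}\lambda^{-1/2}$ together with a finite constant coming from $\|K\|_{L^2}$ that is independent of $\lambda$ and of the $r_v$. Multiplying the $m$ resulting factors yields the stated bound $\big(\prod_v\tfrac{1}{\sqrt{2r_v}}\big)\lambda^{-m/2}$. The hard part will be making this $L^2$ recursion rigorous through the nested integrals: one must bound, uniformly in $\lambda$ and in the ranks $r_v$, the $L^2$-norms along the rotated contours of the inner iterated integrals paired against the singular kernels $z_{k-1}/(z_k-z_{k-1})$, and verify that these Hilbert-transform type kernels are $L^2$-bounded and do not reintroduce any dependence on the $r_v$ beyond the single factor extracted at each vertex.
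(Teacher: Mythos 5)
Your proof is correct and takes essentially the same route as the paper: a Cauchy--Schwarz estimate applied once per vertex, splitting off the exponential factor $e^{-Z(\alpha_v)/z_v}$ whose $L^2$ norm yields $\frac{1}{\sqrt{2 r_v \Im(G)\,\lambda}}$ (i.e. $\frac{1}{\sqrt{2r_v}}\lambda^{-1/2}$ up to the $\lambda$- and $r_v$-independent constant), with the Cauchy-type kernels and inner integrals absorbed into an $L^2$ factor independent of $\lambda$ and the $r_v$. The only cosmetic difference is your preliminary rotation of the contours to the imaginary axis; the paper estimates directly along the rays $\R_{>0}Z(\alpha_v)$, and it is just as brief as you are about the uniform $L^2$-boundedness of the nested kernels that you flag as the remaining technical point.
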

\begin{proof} According to the general formulae \cite{fgs} equations (4.8), (4.9), the weight function $H_T(\lambda G, v_{\beta})$ is given by an iterated integral, whose integrand contains a factor of the form
\begin{equation*}
\frac{1}{2\pi i}\int_{\R_{>0}Z(\alpha_v)}\frac{dz_v}{z_v}\frac{z_u}{z_v - z_u} e^{- Z(\alpha_v)/z_v}
\end{equation*}
for each edge $u \to v$. By the Cauchy-Schwarz inequality and our assumptions this is bounded in modulus by
\begin{equation*}
\left(\int^{\infty}_0 e^{2 r_v \lambda \Re(i G) \sigma} d\sigma\right)^{\frac{1}{2}} = \frac{1}{\sqrt{- 2 r_v \Re(i G) \lambda}} 
\end{equation*}
up to a factor independent of $\lambda$ and $r_v$ (note that we have $\Re(i G) < 0$). The claim follows by applying this bound to each vertex $v$.
\end{proof}
\section{Relation to higher genus Gopakumar-Vafa invariants}\label{genusGSec}
In this Section we prove Theorem \ref{mainThm}. We work with Toda's construction, i.e. in the ``non-Lagrangian" case of $\D = \bra \olo_X, \Coh_{\leq 1}(X)\ket_{\tr} \subset D^{b}(\Coh(X))$. We always assume BPS rationality \eqref{pairsConj}.

We are concerned with the contribution of a curve class $\beta$ to the flat sections of $\nabla$ through the potential
\begin{equation*}
\sum_{T} W_{T}(Z) H_{T}(t, Z).
\end{equation*}
This sum is highly complicated, but contains a distinguished contribution which is a very close analogue of the genus $0$ potential $p_0(t, Z)$, namely the sum over single-vertex graphs
\begin{equation*}
\sum_{n \in \Z}  \sum_{\pm} W_{\pm(-n, -\beta,1)}(Z)|_{x_{\bullet}} H_{\pm(-n, -\beta,1)}(t, Z).
\end{equation*}
Unfortunately it turns out that working directly with this analogue leads to a serious difficulty with signs. We resolve this by working instead with the ``framed" potential
\begin{align*}
\sum_{n \in \Z} \sum_{(\pm,\pm)} W_{\pm(-n, -\beta,1), \pm[\olo_X]}(Z)|_{x_{\bullet} = 1} H_{\pm(-n, -\beta,1),\pm[\olo_X]}(t, Z).
\end{align*}
\begin{lem}\label{framedPotLem} For fixed $n \in \Z$, regarding $t$ as a real variable, we have
\begin{align*}
&\sum_{(\pm,\pm)} W_{\pm(-n, - \beta,1), \pm[\olo_X]}|_{x_{\bullet} = 1}(Z) H_{\pm(-n, - \beta,1),\pm[\olo_X]}(t, Z)\\
&= -(-1)^{n} n P_{n,   \beta}(-n, - \beta, 1)\\
&\quad \int^{\infty}_0 d\sigma \frac{1}{\pi} \frac{t}{1 + (\sigma t)^2} e^{i\sigma(G +  v_{\beta} - n)} \int^{\infty}_0d\tau \frac{1}{\pi} \frac{\sigma}{1+ (\tau \sigma)^2}e^{- G/\tau}.
\end{align*}
\end{lem}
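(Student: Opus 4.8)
The plan is to evaluate the left-hand side by specialising the general formulae of Section~\ref{generalSec} to the two-vertex graph with decorations $\alpha=(-n,-\beta,1)$ and $\alpha'=[\olo_X]$, and then to reduce the resulting iterated integral to the claimed Fourier--Laplace form by the same contour rotations used in the genus $0$ case (Lemma~\ref{genus0Lem}). First I would apply the ``$(\pm,\pm)$'' identity of Lemma~\ref{conformalLem} with this $\alpha,\alpha'$. This collapses the sum over the four sign choices to the single product $W_{\alpha,\alpha'}(Z)|_{x_\bullet=1}\,\widehat H_{\alpha,\alpha'}(t,Z)$, so it remains only to compute the weight and the hatted iterated integral separately.

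For the weight I would use the explicit expression $W_{\alpha,\alpha'}(Z)=\dt(\alpha,Z)\dt(\alpha',Z)(-1)^{\bra\alpha,\alpha'\ket}\bra\alpha,\alpha'\ket\,\alpha\otimes x_{\alpha+\alpha'}$ from the first Lemma of Section~\ref{generalSec}. Here $\dt(\alpha,Z)=P_{n,\beta}$ by Toda's Theorem~\ref{todaThm}, while $\dt([\olo_X],Z)=1$, the invariant of the single stable object $\olo_X$ of class $(0,0,1)$. The only input I need about the intersection form is its restriction to $\Gamma'=H_0\oplus N_1(X)\oplus H_6$: since $H_4$ is absent from $\Gamma'$ the $N_1$-components drop out and $\bra(s,l,r),(s',l',r')\ket=s'r-sr'$, consistent with the identity $\bra(-n,-\beta,1),(-n',-\beta',1)\ket=n-n'$ recorded in Section~\ref{backSec}. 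Hence $\bra(-n,-\beta,1),[\olo_X]\ket=n$, giving $W_{\alpha,\alpha'}(Z)|_{x_\bullet=1}=(-1)^n\,n\,P_{n,\beta}\,(-n,-\beta,1)$, which is exactly the prefactor on the right-hand side.

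It remains to identify $\widehat H_{\alpha,\alpha'}(t,Z)$ with the negative of the nested double integral. By Proposition~\ref{todaProp} (with $A=1$) the two central charges are $Z(\alpha)=G+v_\beta-n$ and $Z(\alpha')=G$. I would then treat the two integrals of $\widehat H_{\alpha,\alpha'}$ by different rotations. For the outer $z$-integral I rotate the ray $\R_{>0}Z(\alpha)$ onto the positive imaginary axis and substitute $z=i/\sigma$; a short computation turns $\tfrac{1}{2\pi i}\tfrac{2t}{z^2-t^2}e^{-Z(\alpha)/z}\,dz$ into $-\tfrac1\pi\tfrac{t}{1+(\sigma t)^2}e^{i\sigma(G+v_\beta-n)}\,d\sigma$, producing the oscillatory phase $e^{i\sigma Z(\alpha)}$ (this is precisely the Fourier--Laplace mechanism that will later yield the change of variable $-q=e^{iu}$). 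For the inner factor $\widehat H_{\alpha'}(z,Z)$, evaluated at $z=i/\sigma$, I instead rotate the ray $\R_{>0}G$ onto the positive real axis and set $w=\tau$, which keeps the Laplace phase $e^{-G/\tau}$ and yields $\tfrac1\pi\tfrac{\sigma}{1+(\tau\sigma)^2}e^{-G/\tau}\,d\tau$. Assembling the two gives $\widehat H_{\alpha,\alpha'}(t,Z)$ equal to minus the claimed nested integral, and multiplying by the weight above completes the proof.

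The main obstacle I expect is justifying these contour rotations rigorously rather than formally. I must check that, as the outer contour is swept from $\arg Z(\alpha)$ to $\pi/2$, it crosses neither the poles $z=\pm t$ on the real axis nor the branch locus of $z\mapsto\widehat H_{\alpha'}(z,Z)$; once the inner contour has been moved to $\R_{>0}$ this branch locus sits along the positive real axis, and the rotation of the inner contour (for $z=i/\sigma$) then avoids the poles $w=\pm z$ on the imaginary axis. Convergence at both ends and the interchange of the two integrations must also be controlled. All of this is governed by the standing hypotheses $\Re(G)>0$, $G\in\H$ and the K\"ahler positivity $\int_\beta\omega>0$, which keep the relevant phases in the open upper half-plane exactly as in Lemma~\ref{genus0Lem}; the present statement is in effect that lemma's contour manipulation iterated once.
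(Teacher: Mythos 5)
Your proposal is correct and follows essentially the same route as the paper's own proof: collapse the four-sign sum via Lemma \ref{conformalLem}, evaluate the weight to $(-1)^n\, n\, P_{n,\beta}\,(-n,-\beta,1)$ using $\dt([\olo_X])=1$ and $\bra(-n,-\beta,1),[\olo_X]\ket=n$, then rotate the outer contour to the imaginary axis and the inner one to the positive real axis. The only differences are cosmetic: you merge the paper's two steps (rotation $z=i\sigma$ followed by the inversion $\sigma\mapsto\sigma^{-1}$) into the single substitution $z=i/\sigma$, and you make explicit the pole/branch-cut and convergence checks that the paper dispatches with ``arguing as in the proof of Lemma \ref{genus0Lem}''.
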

\begin{proof} According to the second identity in Lemma \ref{conformalLem} we have
\begin{align*}
&\sum_{(\pm,\pm)} W_{\pm(-n, - \beta,1), \pm[\olo_X]}(Z)|_{x_{\bullet} = 1} H_{\pm(-n, - \beta,1),\pm[\olo_X]}(t, Z)\\
&= W_{(-n, - \beta,1), [\olo_X]}|_{x_{\bullet} = 1} \widehat{H}_{(-n, - \beta,1),[\olo_X]}(t, Z).
\end{align*}
Recall
\begin{align*}
&W_{(-n, - \beta,1), [\olo_X]}(Z)\\
&= \dt(-n, - \beta,1) \dt([\olo_X]) \bra (-n, - \beta,1), [\olo_X]\ket (-n, - \beta, 1)\\ &\quad\otimes x_{(-n, - \beta,1)} x_{[\olo_X]},
\end{align*}
so 
\begin{equation*}
W_{(-n, - \beta,1), [\olo_X]}(Z)|_{x_{\bullet = 1}} = (-1)^n n P_{n,  \beta} (-n, - \beta,1).
\end{equation*}
On the other hand we have
\begin{align*}
&H_{(-n, - \beta,1),[\olo_X]}(t, Z)\\
&= \frac{1}{2\pi i}\int_{\R_{>0} (G +  v_{\beta} - n)} dz \frac{2t}{z^2-t^2} e^{-(G +  v_{\beta} - n)/z}\frac{1}{2\pi i }\int_{\R_{> 0} G} dw \frac{2z}{w^2 - z^2} e^{-G/w}.
\end{align*}
As we are assuming $t \in \R$, arguing as in the proof of Lemma \ref{genus0Lem}, we can integrate instead over $z = i \sigma$, $\sigma \in \R$, so
\begin{align*}
&H_{(-n, - \beta,1),[\olo_X]}(t, Z)\\
&= -\frac{1}{2\pi}\int^{\infty}_{0} d\sigma \frac{2t}{\sigma^2 + t^2} e^{i(G +  v_{\beta} - n)/\sigma}\frac{1}{2\pi }\int_{\R_{> 0} G} dw \frac{2\sigma}{w^2 + \sigma^2} e^{-G/w}.
\end{align*}
By the same argument we can integrate instead over $w = \tau \in \R$, 
\begin{align*}
&H_{(-n, - \beta,1),[\olo_X]}(t, Z)\\
&= -\int^{\infty}_{0} d\sigma \frac{1}{\pi}\frac{t}{\sigma^2 + t^2} e^{i(G +  v_{\beta} - n)/\sigma}\int^{\infty}_{0} d\tau \frac{1}{\pi}\frac{\sigma}{\tau^2 + \sigma^2} e^{-G/\tau}.
\end{align*}
The claim follows from the change of integration variable $\sigma \mapsto \sigma^{-1}$.
\end{proof}
As explained in Section \ref{backSec}, we focus on the \emph{connected} potential, corresponding to the connected invariants $P'_{n, \beta}$.
\begin{definition} The framed, connected single-vertex contribution of a curve class $\beta$ to flat sections of $\nabla$ is given by 
\begin{align*}
& p_{\beta}(t, Z) = \sum_{n \in \Z} \sum_{(\pm,\pm)} W'_{\pm(-n, -\beta,1), \pm[\olo_X]}(Z)|_{x_{\bullet} = 1} H_{\pm(-n, -\beta,1),\pm[\olo_X]}(t, Z)
\end{align*}
where
\begin{align*}
&\sum_{(\pm,\pm)}W'_{\pm(-n, -\beta,1), \pm[\olo_X]}(Z)|_{x_{\bullet} = 1} H_{\pm(-n, -\beta,1),\pm[\olo_X]}(t, Z)\\
&= -(-1)^{n} n P'_{n,   \beta}(-n, - \beta, 1)\\
&\quad \int^{\infty}_0 d\sigma \frac{1}{\pi} \frac{t}{1 + (\sigma t)^2} e^{i\sigma(G +  v_{\beta} - n)} \int^{\infty}_0d\tau \frac{1}{\pi} \frac{\sigma}{1+ (\tau \sigma)^2}e^{- G/\tau}.
\end{align*}
\end{definition}
\begin{prop}\label{genusGProp} We have
\begin{align*}
&\lim_{G\to 0} \bra \beta^{\vee}, p_{\beta}(t, Z)\ket\\
&= \sum_{g \geq 0} \sum_{r>0, r|\beta}n_{g, \beta/r}   \int^{\infty}_0 d\sigma \frac{1}{2\pi}\frac{t}{1+(\sigma t)^2} e^{i   \sigma  v_{\beta}}i\del_{\sigma} \frac{1}{r}(2\sin(r\sigma/2))^{2g-2} 
\end{align*}
where the $g = 0$ term is a divergent integral, admitting a canonical choice of regularisation, while each $g > 0$ term is a well defined \emph{function} of $t\in \R$.
\end{prop}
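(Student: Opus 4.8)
The plan is to reduce the vector-valued double sum defining $p_\beta(t,Z)$ to a scalar integral, then to recognise the summation over $n$, via the GW/DT change of variable, as a derivative of the connected stable pairs generating function, to which BPS rationality \eqref{pairsConj} applies directly.

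First I would pair the expression in the Definition with $\beta^\vee$. Since the intersection form couples $H_2$ with $H_4$, we have $\bra\beta^\vee,(-n,-\beta,1)\ket=-1$, so the pairing removes the vector factor $(-n,-\beta,1)$ and turns $-(-1)^n$ into $+(-1)^n$, leaving
\begin{equation*}
\bra\beta^\vee,p_\beta(t,Z)\ket=\sum_{n\in\Z}(-1)^n n P'_{n,\beta}\int_0^\infty d\sigma\,\frac1\pi\frac{t}{1+(\sigma t)^2}e^{i\sigma(G+v_\beta-n)}\int_0^\infty d\tau\,\frac1\pi\frac{\sigma}{1+(\tau\sigma)^2}e^{-G/\tau}.
\end{equation*}
The inner $\tau$-integral is the integral over the positive axis of the Poisson kernel $\kappa_{1/\sigma}(\tau)=\frac1\pi\frac{\sigma}{1+(\tau\sigma)^2}$ against the regulator $e^{-G/\tau}$. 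As $G\to0$ this regulator tends to $1$, and since $\int_0^\infty\kappa_{1/\sigma}(\tau)\,d\tau=\tfrac12$ the inner integral tends to $\tfrac12$, while the factors $e^{i\sigma G}$ in the outer integrand tend to $1$. Throughout I would keep $\Im G>0$ as a regulator, so that the pre-limit expression is absolutely convergent (the factor $e^{-\sigma\Im G}$ controls $\sigma\to\infty$), and justify passing to the limit under the integral by dominated convergence on the pieces where this is legitimate.

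The key step is the summation over $n$. Writing $-q=e^{-i\sigma}$, so that $q^n=(-1)^n e^{-i\sigma n}$, identifies the inner sum as a logarithmic derivative of the connected generating function,
\begin{equation*}
\sum_{n\in\Z}(-1)^n n P'_{n,\beta}e^{-i\sigma n}=q\del_q F_{P,\beta}(q)\big|_{q=-e^{-i\sigma}}=i\del_\sigma F_{P,\beta}(-e^{-i\sigma}),
\end{equation*}
using $q\del_q=i\del_\sigma$ under this substitution; this is exactly how the GW/DT change of variable \eqref{gwdtChange} enters. Applying BPS rationality \eqref{pairsConj} together with the elementary identity $(-q)^r-2+(-q)^{-r}=-(2\sin(r\sigma/2))^2$, the sign $(-1)^{g-1}$ in \eqref{pairsConj} cancels against the $(-1)^{g-1}$ produced by the $(g-1)$-th power, leaving the closed trigonometric form $F_{P,\beta}(-e^{-i\sigma})=\sum_{g\geq0}\sum_{r>0,r|\beta}n_{g,\beta/r}\frac1r(2\sin(r\sigma/2))^{2g-2}$. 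Substituting this and interchanging the (finite in $g$) summation with the $\sigma$-integral then yields the stated expansion.

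Finally I would verify the convergence claims term by term. For $g\geq1$ the factor $(2\sin(r\sigma/2))^{2g-2}$ and its $\sigma$-derivative are bounded while $\frac{t}{1+(\sigma t)^2}$ decays like $\sigma^{-2}$, so each $g>0$ integral converges absolutely and defines a function of $t\in\R$ (the $g=1$ term vanishing, its integrand being the $\sigma$-derivative of a constant). For $g=0$, however, $i\del_\sigma(2\sin(r\sigma/2))^{-2}\sim\sigma^{-3}$ as $\sigma\to0$, so the integral diverges, and I would present it as a divergent integral with its canonical regularisation. I expect the main obstacle to be precisely this interchange of the infinite $n$-summation, the $G\to0$ limit, and the $\sigma$-integration: the genus-zero contribution is at once the source of the non-integrable singularity at $\sigma=0$ and, through the non-polynomial resummation $(2\sin)^{-2}$, of the infinite Laurent tail in $n$. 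The honest argument must therefore retain $\Im G>0$ as a regulator, carry out the resummation over $n$ while $|{-e^{-i\sigma}}|$ is pushed off the unit circle, pass to $G\to0$ on the $g>0$ pieces by dominated convergence, and isolate the $g=0$ piece as the single term requiring regularisation.
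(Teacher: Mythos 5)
Your proposal is correct and follows essentially the same route as the paper: take the $G\to 0$ limit using $\int_0^\infty \kappa_{1/\sigma}(\tau)\,d\tau = \tfrac12$, resum over $n$ via $q = -e^{-i\sigma}$ (so $q\del_q = i\del_\sigma$), apply BPS rationality \eqref{pairsConj} to obtain the trigonometric form with the $(-1)^{g-1}$ cancellation, and then treat the $g>0$ terms as convergent integrals while isolating the divergent $g=0$ term for Hadamard-type regularisation. The only difference is that the paper substitutes \eqref{pairsConj} before performing the $n$-sum genus by genus, whereas you resum first and then apply \eqref{pairsConj} to $F_{P,\beta}$, which is the same computation in a different order.
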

\begin{proof} By the previous Lemma, noting that for all $\sigma \in \R$
\begin{equation*}
\int^{\infty}_0d\tau \frac{1}{\pi} \frac{\sigma}{1+ (\tau \sigma)^2}  = \frac{1}{2}
\end{equation*}
we have
\begin{align*}
&\lim_{G\to 0} \bra \beta^{\vee}, p_{\beta}(t, Z)\ket =  \sum_{n \in\Z} (-1)^n n P'_{n,  \beta} \int^{\infty}_0 d\sigma \frac{1}{2\pi} \frac{t}{1 + (\sigma t)^2} e^{i\sigma( v_{\beta} - n)}.  
\end{align*}
According to  \eqref{pairsConj} we have\footnote{The operator $[q^n]$ applied to Laurent polynomials extracts the coefficient of $q^n$.}
\begin{align*}
P'_{n, \beta} = \sum_{g\geq 0} \sum_{r>0, r|\beta}n_{g, \beta/r} [q^{n}]\left(\frac{(-1)^{g-1}}{r}((-q)^{r} - 2 + (-q)^{-r})^{g-1}\right), 
\end{align*}
a finite sum over $g$, so we can rewrite
\begin{align*}
&\lim_{G\to 0} \bra \beta^{\vee}, p_{\beta}(t, Z)\ket\\
&= \sum_{g \geq 0} \sum_{r>0, r|\beta}n_{g, \beta/r}  \int^{\infty}_0 d\sigma \frac{1}{2\pi}\frac{t}{1+(\sigma t)^2} e^{i \sigma v_{\beta}}\\
& \quad\,\sum_{n\in \Z} [(-q)^n] \left(\frac{(-1)^{g-1}}{r}((-q)^{r} - 2 + (-q)^{-r})^{g-1}\right)  n e^{- i n \sigma}.
\end{align*}
We note that
\begin{align*}
&\sum_{n\in \Z} [(-q)^n] \left(\frac{(-1)^{g-1}}{r}((-q)^{r} - 2 + (-q)^{-r})^{g-1}\right)  n e^{- i n \sigma}\\
&= i\del_{\sigma}\left(\left(\frac{(-1)^{g-1}}{r}((-q)^{r} - 2 + (-q)^{-r})^{g-1}\right)\big|_{-q = e^{-i\sigma} }\right)\\
&=i\del_{\sigma} \frac{(-1)^{g-1}}{r}(2(\cos(-r\sigma) - 1))^{g-1}\\ 
&= i\del_{\sigma} \frac{1}{r}(2\sin(r\sigma/2))^{2g-2}.
\end{align*}
This proves the identity claimed by the Proposition. It is straightforward to see that each $g>0$ summand is integrable for all $t \in \R$ and so gives a well defined function of $t$. 

The $g = 0$ term is not integrable near the boundary $\{\sigma = 0\}$, but becomes so after multiplication by a sufficiently large power of the defining function $\sigma$. It is straightforward to check that the smallest such power is $\sigma^3$. In this situation one has a canonical choice of regularisation of the divergent integral, its ``Hadamard finite part" (see e.g. \cite{felder} Section 1). In our case it is given by
\begin{equation*}
\sum_{r>0, r|\beta}n_{0, \beta/r}   \int^{\infty}_0 d\sigma \frac{1}{2\pi}\frac{t}{1+(\sigma t)^2} e^{i   \sigma  v_{\beta}} \left(i\del_{\sigma}\frac{1}{r}(2\sin(r\sigma/2))^{-2} - \frac{2}{r^3 \sigma^3}\right).
\end{equation*}

\end{proof}
%Consider the family of integration kernels, parametrised by $t \in \R$
%\begin{equation*}
%\rho_{t}(\sigma) = \frac{2t}{1+(\sigma t)^2} \in C^{\infty}(\R).
%\end{equation*}
%\begin{lem} As $t\to \infty$ we have
%\begin{equation*}
%\rho_{t}(\sigma) \to \pi \delta(\sigma) \text{ in } (C^{\infty}(\R))'.
%\end{equation*}
%\end{lem}
%\begin{proof} Straightforward.
%\end{proof}
%Since we work near $t = \infty$ we set 
%\begin{equation*}
%t = w^{-1}
%\end{equation*}
%and consider instead \todo{this is essentially the Poisson kernel in the upper half %plane!}
Recall that one defines the Poisson kernel for the upper half plane as the family of functions
\begin{equation*}
\kappa_{\eps}(\sigma) = \frac{1}{\pi}\frac{\eps}{\eps^2 + \sigma^2} \in C^{\infty}(\R).  
\end{equation*}
parametrised by $\eps \in \R_{> 0}$. It is well known that this in the integral kernel for the Laplace operator with Dirichlet boundary conditions on the upper half plane, i.e. given $f \in L^2(\R)$, the convolution
\begin{equation*}
u(x + i y) = \int_{\R} k_{y}(x - t) f(t) dt
\end{equation*}
is well defined and harmonic for $y > 0$, with $u(x + i y) \to f(x)$ in $L^2$. In particular as $\eps \to 0$ we have 
\begin{equation*}
\kappa_{\eps}(\sigma) \to \delta(\sigma) \in (\mathcal{S}(\R))'.
\end{equation*}
Moreover the kernel $k_{\eps}(\sigma)$ is symmetric in $\sigma$, and satisfies interesting partial differential equations in $\eps$, $\sigma$. A particular first order equation is especially useful for our purposes.
\begin{lem} We have
\begin{equation}\label{basicPDE}
( \del_{\eps} - \eps \sigma^{-1}\del_{\sigma} - \eps^{-1}) \kappa_{\eps}(\sigma) = 0.
\end{equation}
\end{lem}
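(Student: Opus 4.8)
The goal is to verify the first-order PDE \eqref{basicPDE} for the Poisson kernel $\kappa_{\eps}(\sigma) = \frac{1}{\pi}\frac{\eps}{\eps^2+\sigma^2}$. The plan is to compute the three terms appearing in the operator $(\del_{\eps} - \eps\sigma^{-1}\del_{\sigma} - \eps^{-1})$ applied to $\kappa_{\eps}$ directly and check that they cancel. This is a routine computation, so the main task is simply to organize the derivatives cleanly.

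First I would compute $\del_{\eps}\kappa_{\eps}(\sigma)$. Writing $\kappa_{\eps}(\sigma) = \frac{1}{\pi}\eps(\eps^2+\sigma^2)^{-1}$, the product and chain rules give
\begin{equation*}
\del_{\eps}\kappa_{\eps}(\sigma) = \frac{1}{\pi}\left(\frac{1}{\eps^2+\sigma^2} - \frac{2\eps^2}{(\eps^2+\sigma^2)^2}\right) = \frac{1}{\pi}\frac{\sigma^2 - \eps^2}{(\eps^2+\sigma^2)^2}.
\end{equation*}
Next I would compute $\del_{\sigma}\kappa_{\eps}(\sigma) = -\frac{1}{\pi}\frac{2\eps\sigma}{(\eps^2+\sigma^2)^2}$, so that
\begin{equation*}
\eps\sigma^{-1}\del_{\sigma}\kappa_{\eps}(\sigma) = -\frac{1}{\pi}\frac{2\eps^2}{(\eps^2+\sigma^2)^2}.
\end{equation*}
Finally the zeroth-order term is $\eps^{-1}\kappa_{\eps}(\sigma) = \frac{1}{\pi}\frac{1}{\eps^2+\sigma^2}$.

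It then remains to substitute these three expressions into the left-hand side of \eqref{basicPDE} and simplify. Placing everything over the common denominator $\pi(\eps^2+\sigma^2)^2$, the numerator becomes $(\sigma^2-\eps^2) + 2\eps^2 - (\eps^2+\sigma^2) = 0$, which establishes the claimed identity. The computation presents no genuine obstacle; the only point requiring mild care is the sign bookkeeping in the middle term, where the factor $\eps\sigma^{-1}$ must cancel the $\sigma$ produced by differentiating $(\eps^2+\sigma^2)^{-1}$ in $\sigma$, leaving a term that exactly offsets the $-2\eps^2$ contribution from $\del_{\eps}$ against the $+\eps^2$ and $+\sigma^2$ contributions from the remaining two terms.
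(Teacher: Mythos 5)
Your computation is correct and is exactly the ``straightforward computation'' the paper invokes without spelling out: the three terms $\del_{\eps}\kappa_{\eps} = \frac{1}{\pi}\frac{\sigma^2-\eps^2}{(\eps^2+\sigma^2)^2}$, $-\eps\sigma^{-1}\del_{\sigma}\kappa_{\eps} = \frac{1}{\pi}\frac{2\eps^2}{(\eps^2+\sigma^2)^2}$ and $-\eps^{-1}\kappa_{\eps} = -\frac{1}{\pi}\frac{\eps^2+\sigma^2}{(\eps^2+\sigma^2)^2}$ indeed sum to zero. There is nothing to add; the approach coincides with the paper's.
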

\begin{proof} A straightforward computation.
\end{proof}
\begin{lem}\label{genusExpansionLem} For $t = \eps^{-1}\in\R$ we have
\begin{equation*}
\lim_{G\to 0} \bra \beta^{\vee}, p_{\beta}(t, Z)\ket = \hat{p}_{0,\beta}(t, Z) + \sum_{g \geq 1} p_{g, \beta}(t)
\end{equation*}
where the $g = 0$ term is a divergent integral, admitting the canonical choice of regularisation 
\begin{align*}
&\hat{p}^{\rm{reg}}_{0,\beta}(\eps^{-1}, Z)\\ &= \sum_{r>0, r|\beta} n_{0, \beta/r} \frac{1}{2}\sum_{r>0} \int^{\infty}_0 d\sigma \kappa_{\eps}(\sigma) e^{i   \sigma v_{\beta}} \left(i\del_{\sigma} \frac{1}{r}(2\sin(r\sigma/2))^{-2}-\frac{2}{r^3\sigma^3}\right),
\end{align*}
while for $g > 0$ we have well defined \emph{functions} of $t \in \R$,
\begin{align*}
p_{g,\beta}(\eps^{-1}, Z) &= \sum_{r>0, r|\beta} n_{g, \beta/r} \frac{1}{2}\sum_{r>0} \int^{\infty}_0 d\sigma \kappa_{\eps}(\sigma) e^{i \sigma v_{\beta}} i\del_{\sigma} \frac{1}{r}(2\sin(r\sigma/2))^{2g-2}.
\end{align*}
\end{lem}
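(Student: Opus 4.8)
The plan is to recognise this lemma as a reformulation of Proposition~\ref{genusGProp} in terms of the Poisson kernel $\kappa_{\eps}$, produced by the substitution $t = \eps^{-1}$. The only computation required is the elementary kernel identity, which I would record first: multiplying numerator and denominator by $\eps^2$, under $t = \eps^{-1}$ the integral kernel appearing in Proposition~\ref{genusGProp} becomes
\begin{equation*}
\frac{1}{2\pi}\frac{t}{1+(\sigma t)^2}\Big|_{t = \eps^{-1}} = \frac{1}{2\pi}\frac{\eps}{\eps^2 + \sigma^2} = \frac{1}{2}\kappa_{\eps}(\sigma).
\end{equation*}
Thus the naturally occurring kernel is exactly one half of the Poisson kernel.

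Next I would insert this identity into the genus expansion of Proposition~\ref{genusGProp} term by term. Because BPS rationality \eqref{pairsConj} makes the sum over $g$ finite for each fixed connected invariant $P'_{n,\beta}$ (as used already in that proof), the expansion over $g \geq 0$ of the corresponding $\kappa_{\eps}$-integrals is precisely the claimed decomposition $\hat{p}_{0,\beta}(t,Z) + \sum_{g \geq 1} p_{g,\beta}(t)$, with the prefactor $\tfrac{1}{2}$ supplied by the identity above. For each $g > 0$ the integrability of the integrand on $(0,\infty)$, hence well-definedness as a function of $t \in \R$, was established in Proposition~\ref{genusGProp}; rewriting the kernel as $\tfrac{1}{2}\kappa_{\eps}(\sigma)$ does not affect this, since $\kappa_{\eps}$ is smooth on $\R$.

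For the $g = 0$ term I would transport the Hadamard finite-part regularisation already displayed in Proposition~\ref{genusGProp}. The counterterm $\tfrac{2}{r^3\sigma^3}$ is dictated solely by the order of the singularity of $i\del_{\sigma}\tfrac{1}{r}(2\sin(r\sigma/2))^{-2}$ at $\sigma = 0$, which is unchanged by the kernel rewriting; hence the regularised expression $\hat{p}^{\mathrm{reg}}_{0,\beta}(\eps^{-1},Z)$ is obtained verbatim, with $\frac{1}{2\pi}\frac{t}{1+(\sigma t)^2}$ replaced by $\tfrac{1}{2}\kappa_{\eps}(\sigma)$.

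There is no genuine obstacle here; the content of the lemma is organisational. The purpose of recasting the identity through $\kappa_{\eps}$ is to prepare the convolution formalism used in the proof of Theorem~\ref{mainThm}, where the first-order equation \eqref{basicPDE} satisfied by $\kappa_{\eps}$ and the concentration $\kappa_{\eps} \to \delta$ as $\eps \to 0$ are exploited through the operator $\mathcal{L}$. The only points deserving care are the bookkeeping of the factor $\tfrac{1}{2}$ and the verification that the regularisation counterterm agrees in both formulations, both of which are immediate from the kernel identity.
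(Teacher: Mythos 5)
Your proposal is correct and matches the paper's approach: the paper's own proof is the single line "This follows at once from Proposition \ref{genusGProp}," and your kernel identity $\frac{1}{2\pi}\frac{t}{1+(\sigma t)^2}\big|_{t=\eps^{-1}} = \frac{1}{2}\kappa_{\eps}(\sigma)$ is exactly the (unstated) computation that justifies it. Your additional remarks on transporting integrability and the Hadamard regularisation are sound and simply make explicit what the paper leaves implicit.
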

\begin{proof} This follows at once from Proposition \ref{genusGProp}. 
\end{proof}
We recognise the genus $g$ contribution $p_{g,\beta}(\eps^{-1}, Z)$ as a multiple of the convolution of the functions 
%\begin{equation*}
$k_{\eps}(\sigma), \, e^{i  \sigma v_{\beta}} i\del_{\sigma} \frac{1}{r}(2\sin(r\sigma/2))^{2g-2}\chi_{[0, \infty)}$,
%\end{equation*}
evaluated at $0$.
%\begin{cor}\label{genusExpansionCor} For $g \geq 1$ we have an identity of well defined functions of $\eps \in \R$
%\begin{align*}
%\sum_{\beta} p_{g, \beta}(\eps^{-1}, Z) = \sum_{\beta} n_{g, \beta} \frac{1}{2}\sum_{r>0} \int^{\infty}_0 d\sigma \kappa_{\eps}(\sigma) e^{i \sigma r v_{\beta}} i\del_{\sigma} \frac{1}{r}(2\sin(r\sigma/2))^{2g-2}
%\end{align*} 
%(summing over all positive curve classes $\beta$).
%\end{cor}
Introduce a differential operator
\begin{equation*}
\mathcal{L} = -i\del_{v_{\beta}} \eps^{-1} (\del_{\eps} - \eps^{-1}).
\end{equation*}
\begin{cor}\label{DiffOp} For $t\in\R, t=\eps^{-1}$ and all $g > 0, j \geq 1$ we have
\begin{align*}
&\mathcal{L}^{j} p_{g,\beta}(\eps^{-1}, Z)\\&=  \sum_{r>0, r|\beta} n_{g, \beta/r} \frac{1}{2} \int^{\infty}_0 d\sigma \del^j_{\sigma}(\kappa_{\eps}(\sigma)) e^{i\sigma  v_{\beta}}\big(i\del_{\sigma}\frac{1}{r}(2\sin(r\sigma)/2)^{2g-2}\big).
\end{align*}
\end{cor}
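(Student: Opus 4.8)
The goal is to compute $\mathcal{L}^j p_{g,\beta}(\eps^{-1},Z)$ and show the differential operator $\mathcal{L}$ acts on the integral representation from Lemma \ref{genusExpansionLem} simply by converting the Poisson kernel factor $\kappa_\eps(\sigma)$ into its $j$-th $\sigma$-derivative $\partial^j_\sigma \kappa_\eps(\sigma)$. The plan is to first establish the single-step identity ($j=1$) and then iterate. The key is that the operator $\mathcal{L}=-i\del_{v_\beta}\eps^{-1}(\del_\eps-\eps^{-1})$ is designed precisely so that, acting under the integral sign in
\begin{equation*}
p_{g,\beta}(\eps^{-1},Z)=\sum_{r>0,r|\beta}n_{g,\beta/r}\frac{1}{2}\int_0^\infty d\sigma\,\kappa_\eps(\sigma)\,e^{i\sigma v_\beta}\,i\del_\sigma\frac{1}{r}(2\sin(r\sigma/2))^{2g-2},
\end{equation*}
its three pieces combine to reproduce one $\sigma$-derivative of the kernel via the basic PDE \eqref{basicPDE}.

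First I would carry out the $j=1$ computation. The factor $-i\del_{v_\beta}$ acts only on $e^{i\sigma v_\beta}$, pulling down a factor of $\sigma$; the operator $\eps^{-1}(\del_\eps-\eps^{-1})$ acts only on $\kappa_\eps(\sigma)$, since the rest of the integrand is independent of $\eps$. So applying $\mathcal{L}$ to the integrand produces $\sigma\,\eps^{-1}(\del_\eps-\eps^{-1})\kappa_\eps(\sigma)$ times $e^{i\sigma v_\beta}\,i\del_\sigma\frac1r(2\sin(r\sigma/2))^{2g-2}$. Now \eqref{basicPDE} states $(\del_\eps-\eps\sigma^{-1}\del_\sigma-\eps^{-1})\kappa_\eps(\sigma)=0$, i.e. $(\del_\eps-\eps^{-1})\kappa_\eps(\sigma)=\eps\sigma^{-1}\del_\sigma\kappa_\eps(\sigma)$. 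Substituting, the prefactor becomes $\sigma\,\eps^{-1}\cdot\eps\sigma^{-1}\del_\sigma\kappa_\eps(\sigma)=\del_\sigma\kappa_\eps(\sigma)$. Thus one application of $\mathcal{L}$ replaces $\kappa_\eps(\sigma)$ by $\del_\sigma\kappa_\eps(\sigma)$ in the integrand, which is exactly the claimed formula for $j=1$.

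For general $j$ I would proceed by induction. The crucial observation is that after one step the integrand again has the shape (function of $\sigma$ independent of $\eps$ and $v_\beta$)$\times\,e^{i\sigma v_\beta}\times\,\del_\sigma\kappa_\eps(\sigma)$, and $\del_\sigma\kappa_\eps(\sigma)$ satisfies an analogue of \eqref{basicPDE}. More cleanly, I would verify that $\mathcal{L}$ sends any integrand of the form $g(\sigma)e^{i\sigma v_\beta}\,\del^{k}_\sigma\kappa_\eps(\sigma)$ to $g(\sigma)e^{i\sigma v_\beta}\,\del^{k+1}_\sigma\kappa_\eps(\sigma)$, where $g(\sigma)=i\del_\sigma\frac1r(2\sin(r\sigma/2))^{2g-2}$ is $\eps$- and $v_\beta$-independent. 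This reduces to showing $\sigma\,\eps^{-1}(\del_\eps-\eps^{-1})\del^k_\sigma\kappa_\eps=\del^{k+1}_\sigma\kappa_\eps$. One way is to differentiate \eqref{basicPDE} $k$ times in $\sigma$ and rearrange; alternatively one checks directly that each $\del^k_\sigma\kappa_\eps$ obeys a shifted version of the basic PDE with the extra commutator terms from $[\del^k_\sigma,\eps\sigma^{-1}]$ producing exactly the telescoping that yields $\del^{k+1}_\sigma\kappa_\eps$ after multiplication by $\sigma\eps^{-1}$.

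The main technical obstacle is justifying that $\mathcal{L}$ may be moved inside the integral and that the resulting integrals converge, since for $g>0$ the integrand $g(\sigma)$ has a pole at $\sigma=0$ and $\del^j_\sigma\kappa_\eps$ grows worse near the boundary. Concretely I must confirm that differentiation under the integral sign in both $v_\beta$ and $\eps$ is licit (dominated convergence with $\eps$ in a compact subset of $\R_{>0}$) and that the boundary contributions at $\sigma=0$ and $\sigma=\infty$ arising from the interplay of $\del_\sigma$ on $\kappa_\eps$ and the singularity of $g(\sigma)$ do not spoil the identity; for $g>0$ the factor $(2\sin(r\sigma/2))^{2g-2}$ with $g\geq 1$ keeps things integrable and the kernel derivatives decay like $\sigma^{-j-1}$ at infinity, so convergence holds, and this is precisely where the exclusion of the singular $g=0$ case (handled separately by Hadamard regularisation) is needed. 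Once this analytic justification is in place, the algebraic computation via \eqref{basicPDE} gives the result immediately.
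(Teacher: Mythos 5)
Your base case $j=1$ is correct and is exactly the paper's computation: by \eqref{basicPDE} one has $\eps^{-1}(\del_\eps-\eps^{-1})\kappa_\eps=\sigma^{-1}\del_\sigma\kappa_\eps$, and the factor of $\sigma$ produced by $-i\del_{v_\beta}$ acting on $e^{i\sigma v_\beta}$ converts this into $\del_\sigma\kappa_\eps$ under the integral; the paper's proof is likewise an induction on $j$, so your overall strategy matches it.

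The gap is in your inductive step. The identity you reduce everything to,
\begin{equation*}
\sigma\,\eps^{-1}(\del_\eps-\eps^{-1})\,\del^k_\sigma\kappa_\eps=\del^{k+1}_\sigma\kappa_\eps,
\end{equation*}
is false for every $k\geq 1$, and the commutator terms you invoke do \emph{not} telescope away. Differentiating \eqref{basicPDE} $k$ times gives $(\del_\eps-\eps^{-1})\del^k_\sigma\kappa_\eps=\eps\,\del^k_\sigma(\sigma^{-1}\del_\sigma\kappa_\eps)$, hence
\begin{equation*}
\sigma\,\eps^{-1}(\del_\eps-\eps^{-1})\del^k_\sigma\kappa_\eps=\del^{k+1}_\sigma\kappa_\eps+\sigma\,[\del^k_\sigma,\sigma^{-1}]\,\del_\sigma\kappa_\eps,
\end{equation*}
and the commutator term survives. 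Concretely, for $k=1$, writing $D=\eps^2+\sigma^2$,
\begin{equation*}
\sigma\,\eps^{-1}(\del_\eps-\eps^{-1})\del_\sigma\kappa_\eps=\frac{8\eps\sigma^2}{\pi D^3},
\qquad
\del^2_\sigma\kappa_\eps=\frac{2\eps(3\sigma^2-\eps^2)}{\pi D^3},
\end{equation*}
whose difference $2\eps/(\pi D^2)=-\sigma^{-1}\del_\sigma\kappa_\eps$ is a strictly positive function, with no reason to integrate to zero against $e^{i\sigma v_\beta}\,i\del_\sigma\frac{1}{r}(2\sin(r\sigma/2))^{2g-2}$. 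The conceptual point is that \eqref{basicPDE} is a property of the particular function $\kappa_\eps$, not an operator identity: $\del_\sigma\kappa_\eps$ does not satisfy the same PDE, so ``$\mathcal{L}$ acts as $\del_\sigma$ on the kernel'' cannot simply be iterated. You should be aware that the paper's own inductive step has the same structure --- it commutes $\del_\eps$ past $\del^{j-1}_\sigma$, applies \eqref{basicPDE} inside the derivative, and then asserts the conclusion, which implicitly uses $\sigma\,\del^{j-1}_\sigma(\sigma^{-1}\del_\sigma\kappa_\eps)=\del^j_\sigma\kappa_\eps$, i.e.\ the same false commutation --- so your proposal reproduces the paper's argument faithfully; but the specific justification you offer (telescoping of the commutators) is demonstrably invalid, and as written the induction does not close for $j\geq 2$ without either tracking the correction terms $\sigma\,[\del^{j-1}_\sigma,\sigma^{-1}]\,\del_\sigma\kappa_\eps$ or replacing the argument by a genuinely different one.
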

\begin{proof}
First we compute using \eqref{basicPDE} and Lemma \ref{genusExpansionLem}
\begin{align*}
&\del_{\eps}  p_{g, \beta}(\eps^{-1}, Z)\\
& = \sum_{r>0, r|\beta} n_{g, \beta/r} \frac{1}{2}\sum_{r>0} \int^{\infty}_0 d\sigma\del_{\eps}\kappa_{\eps}(\sigma) e^{i\sigma v_{\beta}}i\del_{\sigma}\frac{1}{r}(2\sin(r\sigma)/2)^{2g-2}\\
&= \sum_{r>0, r|\beta} n_{g, \beta/r} \\&\frac{1}{2} \int^{\infty}_0 d\sigma(\eps\sigma^{-1}\del_{\sigma}\kappa_{\eps}(\sigma) + \eps^{-1}\kappa_{\eps}(\sigma)) e^{i\sigma  v_{\beta}}i\del_{\sigma}\frac{1}{r}(2\sin(r\sigma)/2)^{2g-2}.
\end{align*}
This shows that we have
\begin{align*}
&-i\del_{v_{\beta}} \eps^{-1} (\del_{\eps} - \eps^{-1}) p_{g, \beta}(\eps^{-1}, Z)\\
& = \sum_{r>0, r|\beta}n_{g, \beta/r} \frac{1}{2} \int^{\infty}_0 d\sigma(\del_{\sigma}\kappa_{\eps}(\sigma))  e^{i\sigma  v_{\beta}}i\del_{\sigma}\frac{1}{r}(2\sin(r\sigma)/2)^{2g-2}
%&=  -\sum_{r|\beta}n_{g, \beta/r} \frac{1}{2}  \int^{\infty}_0 d\sigma(\del_{\sigma}\kappa_{\eps}(\sigma))  e^{i\sigma   v_{\beta}} \big( \del_{\sigma}\frac{1}{r}(2\sin(r\sigma)/2)^{2g-2}\big),
\end{align*}
which is the claim for $j = 1$. Assuming the claim for $\mathcal{L}^{j-1}$, we compute
\begin{align*}
&\del_{\eps} \mathcal{L}^{j-1} p_{g,\beta}(\eps^{-1}, Z)\\ 
&=\sum_{r>0, r|\beta}n_{g,\beta/r}\frac{1}{2} \int^{\infty}_0 d\sigma \del_{\eps} \del^{j-1}_{\sigma} \kappa_{\eps}(\sigma) e^{i\sigma v_{\beta} } \big( i\del_{\sigma}\frac{1}{r}(2\sin(r\sigma)/2)^{2g-2}\big)\\
&=\frac{1}{2} \sum_{r>0, r|\beta}n_{g,\beta/r} \\&\int^{\infty}_0 d\sigma \del^{j-1}_{\sigma} (\eps\sigma^{-1}\del_{\sigma}\kappa_{\eps}(\sigma) + \eps^{-1}\kappa_{\eps}(\sigma)) e^{i\sigma v_{\beta} } \big( \del_{\sigma}\frac{1}{r}(2\sin(r\sigma)/2)^{2g-2}\big).
%&=\int d\sigma\del_{\eps}(\del^{j-1}_{\sigma}\kappa_{\eps}(\sigma)) e^{i\sigma(G + v_{\beta})}\frac{1}{r}(2\sin(r\sigma)/2)^{2g-2}\\
%&= \int d\sigma\del^{j-1}_{\sigma}(\del_{\eps} \kappa_{\eps}(\sigma)) e^{i\sigma(G + v_{\beta})}\frac{1}{r}(2\sin(r\sigma)/2)^{2g-2}\\
%&=\int d\sigma\del^{j-1}_{\sigma}(\eps\sigma^{-1}\del_{\sigma}\kappa_{\eps}(\sigma) + \kappa_{\eps}(\sigma)) e^{i\sigma(G + v_{\beta})}\frac{1}{r}(2\sin(r\sigma)/2)^{2g-2}\\
%&=\int d\sigma(\eps\sigma^{-1}\del_{\sigma}(\del^{j-1}_{\sigma}\kappa_{\eps}(\sigma)) + \del^{j-1}_{\sigma}\kappa_{\eps}(\sigma)) e^{i\sigma(G + v_{\beta})}\frac{1}{r}(2\sin(r\sigma)/2)^{2g-2}.
\end{align*}
It follows that 
\begin{align*}
&-i\del_{v_{\beta}}\eps^{-1}(\del_{\eps} - \eps^{-1})\mathcal{L}^{j-1} p_{g}(\eps^{-1}, Z)\\
&= \sum_{r>0, r|\beta} n_{g, \beta/r} \frac{1}{2}\sum_{r>0} \int^{\infty}_0 d\sigma(\del^j_{\sigma}\kappa_{\eps}(\sigma)) e^{i\sigma r v_{\beta}}\big(i\del_{\sigma}\frac{1}{r}(2\sin(r\sigma)/2)^{2g-2}\big) 
%&=\sum_{\beta} n_{g, \beta} \frac{1}{2}\sum_{r>0} \int^{\infty}_0 d\sigma(\del^j_{\sigma}\kappa_{\eps}(\sigma)) v^{-1}_{\beta}\big( \del_{\sigma} e^{i\sigma r v_{\beta}}\big) i\del_{\sigma}\frac{1}{r}(2\sin(r\sigma)/2)^{2g-2},
\end{align*}
which is the claim for $\mathcal{L}^j$.
%\begin{align*}
%&\del_{\eps}\del_{v_{\beta}} ([-i{\eps}^{-1}(\del_{\eps} - 1)\del_{v_{\beta}}]^{j-1} p_g(\eps)) - [-i{\eps}^{-1}(\del_{\eps} - 1)\del_{v_{\beta}}]^{j-1} p_g(\eps)\\
%&= \eps \int d\sigma\del^{j}_{\sigma}(\kappa_{\eps}(\sigma)) e^{i\sigma(G + v_{\beta})}\frac{1}{r}(2\sin(r\sigma)/2)^{2g-2},
%\end{align*}
%i.e. 
%\begin{equation*}
%[-iw^{-1}(\del_w - 1)\del_{v_{\beta}}]^{j} p_g(\eps) = \int d\sigma \del^{j}_{\sigma}(\kappa_w(\sigma)) e^{i\sigma(G + v_{\beta})}\frac{1}{r}(2\sin(r\sigma)/2)^{2g-2}
%\end{equation*}
%as claimed.
\end{proof}
%Now we have all the ingredients to prove our main result.
\begin{proof}[Proof of Theorem \ref{mainThm}] From Corollary \ref{DiffOp}, integrating by parts, we obtain
\begin{align}\label{byParts}
\nonumber&\mathcal{L}^j  p_{g, \beta}(\eps^{-1},Z)\\
\nonumber&= \frac{1}{2} \sum_{r>0, r|\beta} n_{g,\beta/r}  \left( [\del^{j-1}_{\sigma}(\kappa_{\eps}(\sigma))  e^{i\sigma  v_{\beta}} \big(i\del_{\sigma}\frac{1}{r}(2\sin(r\sigma)/2)^{2g-2}\big)]^{\sigma = \infty}_{\sigma = 0}\right.\\
&\left. - \int^{\infty}_0 d\sigma \del^{j-1}_{\sigma}(\kappa_{\eps}(\sigma)) \del_{\sigma}\big(  e^{i\sigma  v_{\beta}} \big(i\del_{\sigma}\frac{1}{r}(2\sin(r\sigma)/2)^{2g-2}\big)\big)\right).
\end{align} 
By vanishing as $\sigma \to \infty$ we have
\begin{align*}
&[\del^{j-1}_{\sigma}(\kappa_{\eps}(\sigma))  e^{i\sigma  v_{\beta}} \big(i\del_{\sigma}\frac{1}{r}(2\sin(r\sigma)/2)^{2g-2}\big)]^{\sigma = \infty}_{\sigma = 0}\\
& = - \big(\del^{j-1}_{\sigma}(\kappa_{\eps}(\sigma))  e^{i\sigma  v_{\beta}} \big(i\del_{\sigma}\frac{1}{r}(2\sin(r\sigma)/2)^{2g-2}\big)\big)|_{\sigma = 0}.
\end{align*}
So integrating by parts $j$ times we find
\begin{align*}
&\mathcal{L}^j  p_{g, \beta}(\eps^{-1},Z)= \frac{1}{2} \sum_{r>0, r|\beta} n_{g,\beta/r}\\
& \Big(-\sum^{j}_{k = 1}(-1)^{j-k}\big(\del^{k-1}_{\sigma}(\kappa_{\eps}(\sigma)) \del^{j-k}_{\sigma}\big( e^{i\sigma  v_{\beta} }  i\del_{\sigma}\frac{1}{r}(2\sin(r\sigma)/2)^{2g-2} \big)\big)|_{\sigma = 0} \\
&  - (-1)^j \int^{\infty}_0 d\sigma\kappa_{\eps}(\sigma) \del^{j}_{\sigma}\big(e^{i\sigma  v_{\beta} }  i\del_{\sigma}\frac{1}{r}(2\sin(r\sigma)/2)^{2g-2} \big)\Big).
\end{align*}
Now we use the expansion around $\sigma = 0$
\begin{equation*}
\kappa_{\eps}(\sigma) = \sum^{\infty}_{h = 0} 2\frac{(-1)^h}{\eps^{2h+1}} \sigma^{2h}
\end{equation*}
to compute for $h\geq 0$
\begin{align*}
\del^{2h}_{\sigma}\kappa_{\eps}(\sigma)|_{\sigma=0} = 2\frac{(-1)^h}{\eps^{2h+1}}(2h)!,\quad \del^{2h+1}_{\sigma}\kappa_{\eps}(\sigma)|_{\sigma=0} = 0.
\end{align*}
Substituting in \eqref{byParts} gives
\begin{align*}
&\mathcal{L}^{j}  p_{g, \beta}(\eps^{-1}, Z)=\frac{1}{2} \sum_{r>0, r|\beta} n_{g, \beta/r}\\
& \Big( 2(-1)^j\sum^{\lfloor (j-1)/2 \rfloor}_{h = 0}  (-1)^h\frac{(2h)!}{\eps^{2h+1}} \del^{j-2h-1}_{\sigma}\big(e^{i\sigma v_{\beta} } i\del_{\sigma}\frac{1}{r}(2\sin(r\sigma)/2)^{2g-2} \big)\big)|_{\sigma = 0} \\
&  - (-1)^j \int^{\infty}_0 d\sigma\kappa_{\eps}(\sigma) \del^{j}_{\sigma}\big( e^{i\sigma v_{\beta}} i\del_{\sigma}\frac{1}{r}(2\sin(r\sigma)/2)^{2g-2} \big)\Big).
\end{align*}
Recall that we have $\kappa_{\eps}(\sigma) \to \delta(\sigma)$ in $(\mathcal{S}(\R))'$ as $\eps \to 0$, and moreover $\kappa_{\eps}(\sigma) = \kappa_{\eps}(-\sigma)$. In particular we have
\begin{align*}
&\lim_{\eps\to0}\int^{\infty}_0 d\sigma\kappa_{\eps}(\sigma) \del^{j}_{\sigma}\big( e^{i\sigma  v_{\beta} } i\del_{\sigma}\frac{1}{r}(2\sin(r\sigma)/2)^{2g-2} \big)\\
&=\frac{1}{2} \del^{j}_{\sigma}\big( e^{i\sigma v_{\beta} } i\del_{\sigma}\frac{1}{r}(2\sin(r\sigma)/2)^{2g-2} \big)|_{\sigma = 0}. 
\end{align*}
The claim follows.
\end{proof}
\begin{rmk}\label{genus0Rmk} Clearly the proof extends to the regularised genus $0$ term $\hat{p}^{\rm{reg}}_{0,\beta}(\eps^{-1}, Z)$; we omit the details.% since the genus $0$ GV contribution is already covered by Theorem \ref{bridThm}.
\end{rmk}
\begin{rmk}\label{dtgwRmk} The change of variable $-q = e^{-i\sigma}$ emerges naturally from the proof of the Proposition \ref{genusGProp}. Theorem \ref{mainThm} shows that it is the same as the GW/DT change of variable $-q = e^{iu}$.
\end{rmk}
Finally we discuss how to recover the genus $g = 1$ GV contribution
%\begin{equation*}
$\sum_{r>0, r | \beta} n_{1, \beta/r}\frac{1}{r} e^{i v_{\beta}}$. 
%\end{equation*}
Consider the unframed, connected potential 
\begin{align*}
\tilde{p}(t, Z) &= \sum_{n \in \Z} \sum_{\pm} W'_{\pm(-n, -\beta,1)}(Z)|_{x_{\bullet}} H_{\pm(-n, -\beta,1)}(t, Z)\\
&= \sum_{n \in \Z} \sum_{\pm} \pm(-n,-\beta, 1) P'_{n, \beta} H_{\pm(-n, -\beta,1)}(t, Z)
\end{align*}
Regarding $t$ as a real variable, and arguing as in Lemma \ref{framedPotLem} we find
\begin{align*}
&\sum_{\pm} W'_{\pm(-n, - \beta,1)}|_{x_{\bullet} = 1}(Z) H_{\pm(-n, - \beta,1)}(t, Z)\\
&= P'_{n, \beta}(-n, - \beta, 1) \int^{\infty}_0 d\sigma \frac{1}{\pi} \frac{t}{1 + (\sigma t)^2} e^{i\sigma(G +  v_{\beta} - n)}.
\end{align*}
(note in particular a missing factor of $(-1)^n n$ since we are now looking at single-vertex unframed graphs). Then proceeding as in Proposition \ref{genusGProp} we see
\begin{align*}
\lim_{G\to 0} \bra \beta^{\vee}, \tilde{p}(t, Z)\ket &= \sum_{g \geq 0} \sum_{r>0, r|\beta} n_{g, \beta/r} \int^{\infty}_0 d\sigma \frac{1}{2\pi}\frac{t}{1+(\sigma t)^2} e^{i  \sigma v_{\beta}}\\
& \quad\,\sum_{n\in \Z} [q^n] \left(\frac{(-1)^{g-1}}{r}((-q)^{r} - 2 + (-q)^{-r})^{g-1}\right)  e^{- i n \sigma},
\end{align*}
crucially missing the $(-1)^n n$ factor. Note that
\begin{align*}
&\sum_{n\in \Z} [q^n] \left(\frac{(-1)^{g-1}}{r}((-q)^{r} - 2 + (-q)^{-r})^{g-1}\right)  e^{- i n \sigma}\\
&=  \left(\left(\frac{(-1)^{g-1}}{r}((-q)^{r} - 2 + (-q)^{-r})^{g-1}\right)\big|_{-q = e^{-i(\sigma-\pi)} }\right)\\
&= \frac{(-1)^{g-1}}{r}(2(\cos(-r(\sigma-\pi)) - 1))^{g-1}\\ 
&=  \frac{1}{r}(2\sin(r(\sigma-\pi)/2))^{2g-2}.
\end{align*}
Then as in Lemma \ref{genusExpansionLem} we can write the genus expansion
\begin{equation*}
\lim_{G\to 0} \bra \beta^{\vee}, \tilde{p}(t, Z)\ket = \sum_{g \geq 0} \tilde{p}_g(t)
\end{equation*}
where
\begin{align*}
\tilde{p}_{g}(\eps^{-1}, Z) = \sum_{r>0, r|\beta} n_{g, \beta/r} \frac{1}{2}\sum_{r>0} \int^{\infty}_0 d\sigma \kappa_{\eps}(\sigma) e^{i  \sigma v_{\beta}} \frac{1}{r}(2\sin(r(\sigma-\pi)/2))^{2g-2}.
\end{align*} 
This explains the basic difficulty with signs for the unframed potential when $g > 1$. But in genus $g = 1$ it gives exactly what is needed: we have
\begin{equation*}
 \tilde{p}_{1}(\eps^{-1}, Z) = \sum_{r>0, r|\beta} n_{1, \beta/r} \frac{1}{2} \int^{\infty}_0 d\sigma \kappa_{\eps}(\sigma) \frac{1}{r}e^{i   \sigma v_{\beta}},
\end{equation*}
and proceeding as in the proof of Theorem \ref{mainThm} we find for all $j \geq 1$
\begin{align*}
&\mathcal{L}^{j}  \tilde{p}_1(\eps^{-1}, Z)\\&=\frac{1}{2} \sum_{r>0, r|\beta} n_{1, \beta/r}\Big( -2\sum^{\lfloor (j-1)/2 \rfloor}_{h = 0}(-1)^{j+h} \frac{(2h)!}{\eps^{2h+1}} \del^{j-2h -1}_{\sigma}\big( \frac{1}{r}e^{i\sigma   v_{\beta} }\big)|_{\sigma = 0}\\&   - (-1)^j \int^{\infty}_0 d\sigma\kappa_{\eps}(\sigma) \del^{j }_{\sigma}\big(  \frac{1}{r}e^{i\sigma  v_{\beta} } \big)\Big).
\end{align*}
Taking limits as $\eps \to 0$ we obtain
\begin{equation*}
\lim_{\eps\to 0} \tilde{p}_{1}(\eps^{-1}, Z) = \frac{1}{4} \sum_{r>0, r|\beta} n_{1, \beta/r} \frac{1}{r}e^{i  \sigma v_{\beta}}|_{\sigma = 0},
\end{equation*}
and for $j \geq 1$
\begin{align*}
&\lim_{\eps \to 0}\Big(\mathcal{L}^{j} \tilde{p}_1(\eps^{-1}, Z)\\
&-  \sum^{\lfloor (j-1)/2 \rfloor}_{h = 0}(-1)^{j+h} \frac{(2h)!}{\eps^{2h+1}} \del^{j-2h-1}_{\sigma}\big( \sum_{r | \beta } n_{1, \beta/r}\frac{1}{r}e^{i\sigma   v_{\beta} }\big)|_{\sigma = 0}\Big) \\
&=  - \frac{(-1)^j}{4} \del^{j}_{\sigma}\big(  \sum_{r>0, r|\beta}n_{1, \beta/r}\frac{1}{r}e^{i\sigma  v_{\beta} } \big)|_{\sigma = 0},
\end{align*} 
which shows how to extract the genus $g = 1$ contribution from flat sections of $\nabla$.

\end{document}